\title
{A tightness criterion for fragmentations}
\date{7 July, 2025;
revised 18 June, 2026}
\author{Gabriel Berzunza Ojeda}
\address{Department of Mathematical Sciences, University of Liverpool, United
  Kingdom} 
\email{gabriel.berzunza-ojeda@liverpool.ac.uk} 
\urladdr{https://www.liverpool.ac.uk/mathematical-sciences/staff/gabriel-berzunza-ojeda/}
\author{Cecilia Holmgren} 
\address{Department of Mathematics, Uppsala University, Sweden} 
\email{cecilia.holmgren@math.uu.se} 
\urladdr{https://katalog.uu.se/empinfo/?id=N5-824}
\author{Svante Janson}
\thanks{Supported by the Knut and Alice Wallenberg Foundation; 
 Ragnar Söderberg's Foundation; 
the Swedish Research Council.  
}
\address{Department of Mathematics, Uppsala University, 
Sweden}
\email{svante.janson@math.uu.se}
\urladdr{https://www2.math.uu.se/$\sim$svantejs}
\subjclass[2020]{60F17} 
\numberwithin{equation}{section}
\renewcommand\le{\leqslant}
\renewcommand\ge{\geqslant}
\renewcommand\leq{\leqslant}
\renewcommand\geq{\geqslant}
\theoremstyle{plain}
\newtheorem{theorem}{Theorem}[section]
\newtheorem{lemma}[theorem]{Lemma}
\newtheorem{proposition}[theorem]{Proposition}
\newtheorem{corollary}[theorem]{Corollary}
\theoremstyle{definition}
\newcommand\xqed[1]{%
    \leavevmode\unskip\penalty9999 \hbox{}\nobreak\hfill
    \quad\hbox{#1}}
\newtheorem{exampleqqq}[theorem]{Example}
\newenvironment{example}{\begin{exampleqqq}}
  {\xqed{$\triangle$}\end{exampleqqq}}
\newtheorem{remarkqqq}[theorem]{Remark}
\newenvironment{remark}{\begin{remarkqqq}}
  {\xqed{$\triangle$}\end{remarkqqq}}
\newtheorem*{acks}{Acknowledgements}
\theoremstyle{remark}
\newcounter{dummy}
\newcommand\myitem[1][]{\item[#1]\refstepcounter{dummy}\def\@currentlabel{#1}}
\newenvironment{romenumerate}[1][-10pt]{
\addtolength{\leftmargini}{#1}\begin{enumerate}
 }{\end{enumerate}}
\newcounter{oldenumi}
{\setcounter{oldenumi}{\value{enumi}}
\begin{romenumerate} \setcounter{enumi}{\value{oldenumi}}}
{\end{romenumerate}}
\newcounter{thmenumerate}
\newenvironment{thmenumerate}
{\setcounter{thmenumerate}{0}%
 \def\item{\par
 \refstepcounter{thmenumerate}\textup{(\roman{thmenumerate})\enspace}}
}
{}
\newcounter{xenumerate}   
\newcommand\pfitemx[1]{\par#1:}
\newcommand\pfitemref[1]{\pfitemx{\ref{#1}}}
\newcommand{\refT}[1]{Theorem~\ref{#1}}
\newcommand{\refTs}[1]{Theorems~\ref{#1}}
\newcommand{\refC}[1]{Corollary~\ref{#1}}
\newcommand{\refCs}[1]{Corollaries~\ref{#1}}
\newcommand{\refL}[1]{Lemma~\ref{#1}}
\newcommand{\refR}[1]{Remark~\ref{#1}}
\newcommand{\refS}[1]{Section~\ref{#1}}
\newcommand{\refSs}[1]{Sections~\ref{#1}}
\newcommand{\refApp}[1]{Appendix~\ref{#1}}
\xdef\klockan{\the\count1.0\the\count255}
\xdef\klockan{\the\count1.\the\count255}\fi
\newcommand{\sumi}{\sum_{i=1}^\infty}
\newcommand{\sumik}{\sum_{i=1}^k}
\newcommand\set[1]{\ensuremath{\{#1\}}}
\newcommand\bigset[1]{\ensuremath{\bigl\{#1\bigr\}}}
\newcommand\Bigset[1]{\ensuremath{\Bigl\{#1\Bigr\}}}
\newcommand\xpar[1]{(#1)}
\newcommand\bigpar[1]{\bigl(#1\bigr)}
\newcommand\Bigpar[1]{\Bigl(#1\Bigr)}
\newcommand\bigsqpar[1]{\bigl[#1\bigr]}
\newcommand\cpar[1]{\{#1\}}
\newcommand\bigabs[1]{\bigl\lvert#1\bigr\rvert}
\def\rompar(#1){\textup(#1\textup)}    
\def\xexp(#1){e^{#1}}
\newcommand\floor[1]{\lfloor#1\rfloor}
\newcommand\ntoo{\ensuremath{{n\to\infty}}}
\newcommand\downto{\searrow}
\newcommand{\tend}{\longrightarrow}
\newcommand\dto{\overset{\mathrm{d}}{\tend}}
\newcommand\pto{\overset{\mathrm{p}}{\tend}}
\newcommand\fdto{\overset{\mathrm{f.d.}}{\tend}}
\newcommand\eqd{\overset{\mathrm{d}}{=}}
\newcommand\Op{O_{\mathrm p}}
\newcommand\bbR{\mathbb R}
\newcommand\bbN{\mathbb N}
\newcounter{CC}
\newcounter{cc}
\newcommand\E{\operatorname{\mathbb E}{}} 
\renewcommand\P{\operatorname{\mathbb P{}}}
\newcommand\Exp{\operatorname{Exp}}
\newcommand\ga{\alpha}
\newcommand\gs{\sigma}
\newcommand\gU{\Upsilon}
\newcommand\eps{\varepsilon}
\renewcommand\phi{\xxx}  
\newcommand\cF{\mathcal F}
\newcommand\cT{{\mathcal T}}
\newcommand\cV{\mathcal V}
\newcommand\indic[1]{\boldsymbol1\cpar{#1}}
\newcommand\etta{\boldsymbol1}
\newcommand\qw{^{-1}}
\newcommand\oi{\ensuremath{[0,1]}}
\newcommand\ooo{[0,\infty)}
\newcommand\lhs{left-hand side}
\newcommand\rhs{right-hand side}
\newcommand\xoo{_1^\infty}
\newcommand\ixoo{_{i=1}^\infty}
\newcommand\nn{^{(n)}}
\newcommand\SSSx{\mathbb S}
\newcommand\SSS{\mathbb S^\downarrow}
\newcommand\SSSoi{\SSS_{\le1}}
\newcommand\SSSi{\SSS_1}
\newcommand\SSSoo{\SSS_{<\infty}}
\newcommand\YY{Y}
\newcommand\bft{\mathbf{t}}
\newcommand\hT{\widehat{T}}
\newcommand\hX{\widehat{X}}
\newcommand\TPL{\Upsilon}
\newcommand\TPLn{\TPL\nn}
\newcommand\bp{\mathbf{p}}
\newcommand\bw{\mathbf{w}}
\newcommand\cadlag{c\`adl\`ag}
\begin{document}

\begin{abstract} 
This note presents a simple criterion for the tightness of stochastic fragmentation processes.  Our work is motivated by an application to a fragmentation process derived from deleting edges in a conditioned Galton-Watson tree \cite{BOH2023}. In that paper, while finite-dimensional convergence was established, the claimed functional convergence relied on \cite[Lemma 22]{BroutinMarckert}, which is unfortunately incorrect. We show how our results can correct the proof in \cite{BOH2023}. 

Furthermore, we show the applicability of our results by establishing tightness for fragmentation processes derived from various random tree models previously studied in the literature, such as Cayley trees, trees with specified degree sequences, and $\mathbf{p}$-trees.
\end{abstract}

\maketitle

\section{Introduction}

We consider fragmentation processes. 
Intuitively, a (deterministic or random) fragmentation process describes how
a mass is split into smaller and smaller pieces as time increases.
We are only interested in the masses (or sizes) of the pieces, which are
positive real numbers that we arrange in decreasing order.
(We use ``positive'', ``increasing'' and ``decreasing'' in the weak sense.)
The number of pieces may be finite or (countably) infinite, but we will
always  write the masses as an infinite sequence $(x_i)\xoo$, extending a finite
sequence by zeroes.
The space of possible mass configurations
at any given time $t\ge0$ is thus
\begin{align}\label{SSS}
  \SSS:=\bigset{(x_i)\xoo:x_1\ge x_2\ge \dots \ge0}.
\end{align}
(We are usually interested only in smaller subspaces of this space, see
\refS{Snot}, but we try to state results in great generality.)
We define also a partial order $\preceq$ on $\SSS$; the intuitive meaning
of $(y_i)\xoo\preceq(x_i)\xoo$ is that the fragmentation
$(y_i)\xoo$ can be obtained from $(x_i)\xoo$ by further fragmenting each
of the pieces in some way; see \eqref{preceq} for a formal definition.

A (deterministic or stochastic) \emph{fragmentation process} is a 
function $X(\cdot)=(X_i(\cdot))\xoo$ from $\bbR_+:=\ooo$ to $\SSS$, which is \cadlag{}
(for the product topology on $\SSS$, see \refS{Snot}), and furthermore is
decreasing for the order $\preceq$; in other words,
\begin{align}
  \label{preceq0}
X(t_2) \preceq X(t_1)
\qquad\text{when $0\le t_1\le t_2$}.
\end{align}
Note that we do not assume any other properties, such as, for example, Markov.
(We state all results for fragmentation processes
defined on $\ooo$; the results hold also for processes defined on $\oi$,
\emph{mutatis mutandis}.)

The purpose of this note is to state and prove a simple criterion for the tightness of a sequence of stochastic fragmentation processes, see \refS{Smain}. This result is primarily intended for applications where finite-dimensional convergence of the processes can already be established;
if one also can show that our condition holds,
then one can conclude that the processes converge
in the Skorohod space $D(\bbR_+,\SSS)$
(\refT{T2}).

The motivation for this note is such an application to a certain
fragmentation process derived from deleting edges in a random tree
\cite[Lemma 5]{BOH2023}.
In that paper first finite-dimensional convergence was proved,
and then convergence in $D(\bbR_+,\SSS)$ was claimed 
using a general compactness result in  \cite[Lemma 22]{BroutinMarckert} --
however, this lemma in \cite{BroutinMarckert} 
is unfortunately incorrect (see \refS{Swrong}). In \refS{Swrong}, we provide a correct proof of \cite[(2) of Theorem 3]{BroutinMarckert}. Furthermore, in \refS{Scorrect}, we show how the proof in \cite{BOH2023} can be corrected using the results established below.

We state our general results for fragmentation processes in \refS{Smain},
with proofs in \refS{Spf}. In the remaining sections, we apply these
findings to the fragmentation of trees. Specifically, we present general
results in \refS{Stree} and their weighted versions in \refS{Sweight}. In
\refSs{Scorrect}--\ref{Sfixed} and \ref{ptreesApp}, we demonstrate the
applicability of our results by establishing tightness for fragmentation
processes derived from various random tree models previously studied in the
literature (such as Cayley trees, trees with specified degree sequences, and 
$\mathbf{p}$-trees), 
for which only convergence of finite-dimensional distributions had been established and tightness had been left unproven.

\section{Notation}\label{Snot}

Recall the definition \eqref{SSS} of $\SSS$.
We equip $\SSS$ with the product topology, i.e., the topology as a subspace of
$\bbR^\infty$. 
Convergence $(x\nn_i)\xoo\to(x_i)\xoo$ is thus equivalent to $x\nn_i\to x_i$
for each $i$.
Then $\SSS$ is a Polish space, i.e., a complete separable metric space.
There are different metrics that generate the product topology;
one convenient standard choice is
\begin{align}\label{vd1}
  d\bigpar{(x_i)\xoo,(y_i)\xoo}:=\sumi 2^{-i}\min\bigpar{|x_i-y_i|,1}.
\end{align}
(Note that $\SSS$ is a closed subset of $\bbR^\infty$, and thus complete for
this metric.)

We consider also the subspaces
\begin{align}\label{SSSoi}
  \SSSoi&:=\Bigset{(x_i)\xoo\in\SSS: 0\le\sumi x_i \le1},
\\\label{SSSi}
  \SSSi&:=\Bigset{(x_i)\xoo\in\SSS: \sumi x_i =1}.
\end{align}
These are given the subspace topologies, defined by the same metric \eqref{vd1}.
Some useful equivalent metrics on $\SSSoi$ and $\SSSi$ are discussed in
\refL{L0}.

Let $\bbN:=\set{1,2,3,\dots}$. 

As said in the introduction, we define  a partial order on $\SSS$; the
formal definition is
\begin{align}
  \label{preceq}
(y_i)\xoo \preceq(x_i)\xoo
\iff
\exists\ga:\bbN\to\bbN 
\text{ such that }
\sum_{j:\ga(j)=i}y_j \le x_i 
\text{ for all }
i\in\bbN.
\end{align}
The intuitive interpretation is that the piece corresponding to $y_j$ is 
a fragment of the piece corresponding to $x_{\ga(j)}$;
in other words, the piece with mass $x_i$ is fragmented into pieces with
masses $(y_j:j\in\ga\qw(i))$.
Note the inequality in the definition \eqref{preceq}; we allow mass to
disappear (or, equivalently, becoming invisible dust); 
in particular, $\ga\qw(i)$ may be empty.
(For processes in $\SSSi$, strict inequality is not possible so
there is no dust.)

Recall that we define
a (deterministic or stochastic) \emph{fragmentation process} as a 
function $X(\cdot)=(X_i(\cdot))\xoo$ from $\bbR_+:=\ooo$ to $\SSS$, which is \cadlag{} 
in the topology above, and decreasing for the order $\preceq$, i.e.,
\eqref{preceq0} holds.

For any metric space $\SSSx$, the space $D(\bbR_+,\SSSx)$
consist of all \cadlag{} functions $\bbR_+\to\SSSx$, equipped with the Skorohod
$J_1$ topology;
if $\SSSx$ is complete and separable, then so is
$D(\bbR_+,\SSSx)$, see e.g.\
\cite[Chapter 3]{Billingsley2ed} 
or
\cite[Chapter 3]{EthierKurtz}.
Note that equivalent metrics in $\SSSx$ define equivalent metrics and thus
the same topology in  
$D(\bbR_+,\SSSx)$, e.g.\ as a consequence of
\cite[Problem 3.11.13]{EthierKurtz}.

Given a stochastic process $X(\cdot)$, we say 
that a random variable $\tau$ is an \emph{$X$-stopping time} if 
it is a stopping time with respect to the natural filtration 
generated by $X$, i.e., the filtration
$(\cF_t)_{t\ge0}$ with $\cF_t:=\gs\set{X(s): s\le t}$, $t\ge0$.

For stochastic processes $X\nn(\cdot)$ and $X(\cdot)$,
\emph{finite-dimensional convergence}, denoted $X_n(\cdot)\fdto X(\cdot)$,
means joint convergence in distribution $(X_n(t_i))_{i=1}^m\dto (X(t_i))_{i=1}^m$
for any fixed finite sequence $t_1,\dots,t_m$.
(In general, it suffices that this holds for $t_1,\dots t_m$ in the dense
set of times $t$ where $X(t)$ a.s.\ does not jump. In our applications
this is the entire $\bbR_+$.)

If $X_n$ are random variables and $a_n$ positive numbers, then
$X_n=\Op(a_n)$ means
that the variables $X_n/a_n$ are bounded in probability (tight).

Unspecified limits are as \ntoo.

\section{Main results}\label{Smain}

One of our main results is the following, which is proved in \refS{Spf},
based on Aldous's  criterion for tightness. 

\begin{theorem}\label{T1}
  Let $X\nn(\cdot)=(X\nn_i(\cdot))\ixoo$ be a sequence of
fragmentation processes with $X\nn(t)\in\SSS$,
and define
\begin{align}\label{qn}
  Q\nn(t):=\sumi \bigpar{X_i\nn(t)}^2.
\end{align}
Assume that $Q\nn(0)<\infty$ (which implies $Q\nn(t)<\infty$ for every $t\ge0$).
Suppose further that 
\begin{romenumerate}
\item \label{T1i}
for every fixed $t\ge0$, the family $X\nn(t)$ of random variables in $\SSS$
is tight, and
\item \label{T1ii}
for every uniformly bounded sequence of
$X^n$-stopping times $\tau_n$ and every sequence $h_n$ of positive numbers
with $h_n\to0$, we have, as \ntoo,
\begin{align}
  \label{qn2p}
Q\nn(\tau_n) -Q\nn(\tau_n+h_n) \pto0.
\end{align}
\end{romenumerate}
Then,  the family of processes
$X\nn(\cdot)$ is tight in $D(\bbR_+,\SSS)$.
\end{theorem}
It is easy to see, and verified in detail in \refL{L2} below,
that $Q\nn(t)$ is a decreasing function of $t$, and thus 
$Q\nn(\tau_n)\ge Q\nn(\tau_n+h_n)$.
\refT{T1} thus has the following corollary.

\begin{corollary}\label{CT1}
  With the notation in \refT{T1}, 
suppose that 
\ref{T1i} holds, and furthermore that
for every uniformly bounded sequence of
$X^n$-stopping times $\tau_n$ and every sequence $h_n$ of positive numbers
with $h_n\to0$, we have, as \ntoo,
\begin{align}
  \label{qn2}
\E Q\nn(\tau_n) -\E Q\nn(\tau_n+h_n) \to0,
\end{align}
where we assume  that the \lhs{} 
is not of the form $\infty-\infty$. 
Then,  the family of processes
$X\nn(\cdot)$ is tight in $D(\bbR_+,\SSS)$.
\end{corollary}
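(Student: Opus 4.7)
The plan is very short because this corollary is essentially an immediate consequence of \refT{T1} together with the monotonicity of $Q\nn$.

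First I would invoke the fact (stated in the excerpt and to be verified formally in \refL{L2}) that $t\mapsto Q\nn(t)$ is a decreasing function, so that the random variable
\begin{align*}
  \Delta_n := Q\nn(\tau_n) - Q\nn(\tau_n+h_n)
\end{align*}
is nonnegative for every $n$, every $X\nn$-stopping time $\tau_n$, and every $h_n>0$.

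Next, given the hypothesis \eqref{qn2}, i.e.\ $\E\Delta_n\to 0$, I would apply Markov's inequality to the nonnegative variable $\Delta_n$: for any $\eps>0$,
\begin{align*}
  \P(\Delta_n>\eps) \le \eps\qw \E\Delta_n \to 0,
\end{align*}
which gives $\Delta_n\pto 0$. This is exactly condition \eqref{qn2p} in \refT{T1}, so tightness in $D(\bbR_+,\SSS)$ follows from that theorem.

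There is no real obstacle here; the whole content of the corollary is the observation that $L^1$ convergence to zero of a nonnegative sequence implies convergence in probability, combined with monotonicity of $Q\nn$. The only point one must be slightly careful about is that the monotonicity of $Q\nn$ holds pathwise (so that $\Delta_n\ge 0$ almost surely for every choice of $\tau_n$ and $h_n$), which is provided by \refL{L2} and uses only the definition of a fragmentation process via the partial order $\preceq$.
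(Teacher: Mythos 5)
Your proof is correct and matches the paper's argument essentially verbatim: both use the monotonicity from Lemma~\ref{L2} to get $Q\nn(\tau_n)-Q\nn(\tau_n+h_n)\ge0$, then deduce convergence in probability from \eqref{qn2} (the paper states this as ``$L^1\Rightarrow$ in probability,'' you spell it out via Markov's inequality), and finally invoke Theorem~\ref{T1}.
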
  

As a simple consequence, we obtain the following, also proved in \refS{Spf}.

\begin{theorem}\label{T2}
  Let $X\nn(\cdot)=(X\nn_i(\cdot))\ixoo$ be a sequence of
fragmentation processes with $X\nn(t)\in\SSS$, and suppose that we have
finite-dimensional convergence
\begin{align}
  X\nn(\cdot)\fdto X(\cdot)
\end{align}
for some process $X(\cdot)\in D(\bbR_+,\SSS)$.
Define $Q\nn(t)$ by \eqref{qn} and suppose that the condition 
\eqref{qn2p} (or the stronger \eqref{qn2}) holds
for every uniformly bounded sequence of
$X^n$-stopping times $\tau_n$ and every sequence $h_n$ of positive numbers
with $h_n\to0$.
Then,  
$X\nn(\cdot)\dto X(\cdot)$  in $D(\bbR_+,\SSS)$.
\end{theorem}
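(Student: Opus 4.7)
The proof should be essentially a bookkeeping argument that combines \refT{T1} with finite-dimensional convergence via the standard tightness/uniqueness paradigm for weak convergence on Skorohod space. The plan is as follows.

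First, I would invoke \refT{T1} (or equivalently \refC{CT1}) to conclude that the sequence $X\nn$ is tight in $D(\bbR_+,\SSS)$. Since $\SSS$ is a Polish space (\refL{L0}), so is $D(\bbR_+,\SSS)$, and tightness is equivalent to relative compactness by Prokhorov's theorem. Hence every subsequence of $X\nn$ contains a further subsequence that converges in distribution in $D(\bbR_+,\SSS)$ to some random element $\tX$ of that space.

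Second, I would identify the limit. For any subsequential limit $\tX$ and any $t_1,\dots,t_m$ at which $X$ is a.s.\ continuous, the evaluation map $\omega\mapsto(\omega(t_1),\dots,\omega(t_m))$ from $D(\bbR_+,\SSS)$ to $\SSS^m$ is continuous at the sample paths of $\tX$ provided $\tX$ is a.s.\ continuous at $t_1,\dots,t_m$; this follows from the standard properties of the Skorohod $J_1$ topology (see \eg{} \cite[Chapter 3]{EthierKurtz}). By the continuous mapping theorem and the assumed finite-dimensional convergence, $(\tX(t_i))_{i=1}^m\eqd(X(t_i))_{i=1}^m$ for all such $t_i$. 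Because $X$ is \cadlag{} with values in the Polish space $\SSS$, its set of fixed discontinuities is at most countable, so these vectors of times are dense in $\bbR_+^m$. This determines the finite-dimensional distributions of $\tX$ at a dense set of times, and hence, by right-continuity, the law of $\tX$ on $D(\bbR_+,\SSS)$. Therefore every subsequential limit has the same distribution as $X$, which upgrades the relative compactness to $X\nn\dto X$.

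The only subtle point is verifying that $\tX$ itself is continuous at points where $X$ is a.s.\ continuous, so that the evaluation map may be applied. This is not automatic for general càdlàg limits, but the standard workaround is to note that the set of fixed discontinuity times of $\tX$ is also at most countable, so one may restrict to the intersection of the two continuity sets, which is still dense; this is the argument used in \cite[Theorem 3.7.8]{EthierKurtz}. Beyond that, the proof is routine: everything reduces to the tightness supplied by \refT{T1} together with the identification of the limit by finite-dimensional distributions.
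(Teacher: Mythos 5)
Your proposal is correct and matches the paper's approach: the paper also deduces tightness from Theorem~\ref{T1} (or Corollary~\ref{CT1}) and then combines it with finite-dimensional convergence, citing the standard result (Billingsley, Theorem~13.1) that you have essentially re-derived in detail via Prokhorov and subsequential-limit identification.
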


We have defined fragmentation processes as processes taking values in
$\SSS$, and therefore 
the results above are stated for $D(\bbR_+,\SSS)$. 
In applications we frequently have 
fragmentation processes taking values in $\SSSoi$ or $\SSSi$, and then the
results above often may be combined with the
following simple lemmas
to conclude the corresponding results in
$D(\bbR_+,\SSSoi)$ or $D(\bbR_+,\SSSi)$. 
Note that it is potentially more useful to use these spaces since, as shown
in \refL{L0} below, we may use stronger metrics on  $\SSSoi$ and $\SSSi$ than on
$\SSS$, for example the $\ell^2$ and $\ell^1$ metrics, respectively.

\begin{remark}
 If the fragmentation processes $X\nn(\cdot)$ take their values in $\SSSoi$
 or $\SSSi$, then condition \ref{T1i} in \refT{T1}
holds automatically, since $\SSSoi$ is compact,
and thus \eqref{qn2p} or \eqref{qn2} is sufficient for tightness.
\end{remark}

\begin{lemma}\label{LSSSoi}
Let $X\nn(\cdot)=(X\nn_i(\cdot))\ixoo$ be a sequence of
fragmentation processes with $X\nn(t)\in\SSSoi$.
\begin{romenumerate}
  \item \label{LSSSoi1}
  The family of processes
$X\nn(\cdot)$ is tight in $D(\bbR_+,\SSSoi)$
if and only if it is tight in $D(\bbR_+,\SSS)$.
\item \label{LSSSoi2}
Suppose further that $X(\cdot)$ is a process in $D(\bbR_+,\SSSoi)$.
Then
$X\nn(\cdot)\dto X(\cdot)$  in $D(\bbR_+,\SSSoi)$
if and only if
$X\nn(\cdot)\dto X(\cdot)$  in $D(\bbR_+,\SSS)$.
\end{romenumerate}
\end{lemma}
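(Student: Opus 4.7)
The plan is to realize $D(\bbR_+,\SSSoi)$ as a closed subspace of $D(\bbR_+,\SSS)$; once that is done, both (i) and (ii) reduce to standard facts relating tightness, respectively convergence in distribution, in a Polish space and in one of its closed subsets.

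First, I would show that $\SSSoi$ is closed in $\SSS$ for the product topology: if $x\nn\to x$ coordinatewise with $x\nn\in\SSSoi$, then $\sum_{i=1}^N x_i=\lim_n \sum_{i=1}^N x\nn_i\le 1$ for every finite $N$, and letting $N\to\infty$ gives $x\in\SSSoi$. By \refL{L0}\ref{L0SSSoi}, the topology on $\SSSoi$ inherited from the product metric on $\SSS$ coincides with the intrinsic Polish topology of $\SSSoi$ used to define $D(\bbR_+,\SSSoi)$. Since the Skorohod $J_1$ topology depends only on the topology of the state space (not on the specific metric), the canonical inclusion identifies $D(\bbR_+,\SSSoi)$ with the subset
\begin{equation*}
D^\star:=\bigset{f\in D(\bbR_+,\SSS):\ f(t)\in\SSSoi \text{ for every }t\ge 0},
\end{equation*}
equipped with the subspace topology inherited from $D(\bbR_+,\SSS)$.

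The next step is to verify that $D^\star$ is closed in $D(\bbR_+,\SSS)$. Suppose $f_n\to f$ in $D(\bbR_+,\SSS)$ with every $f_n\in D^\star$. At each continuity point $t$ of $f$ we have $f_n(t)\to f(t)$ in $\SSS$, and hence $f(t)\in\SSSoi$ since $\SSSoi$ is closed. The set of continuity points of $f$ is cocountable, so every $t\ge 0$ is a right-limit of continuity points, and the right-continuity of $f$ combined with the closedness of $\SSSoi$ yields $f(t)\in\SSSoi$ for every $t\ge 0$; thus $f\in D^\star$.

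With this in hand, both parts are routine. For \ref{LSSSoi1}, tightness in $D(\bbR_+,\SSSoi)$ trivially implies tightness in $D(\bbR_+,\SSS)$; conversely, given a compact $K_\eps\subset D(\bbR_+,\SSS)$ with $\P(X\nn\in K_\eps)>1-\eps$, the set $K_\eps\cap D^\star$ is closed in $K_\eps$ (hence compact in $D^\star$), and since $X\nn$ takes values in $D^\star$ we still have $\P(X\nn\in K_\eps\cap D^\star)>1-\eps$. For \ref{LSSSoi2}, convergence in distribution in $D(\bbR_+,\SSSoi)$ plainly gives convergence in $D(\bbR_+,\SSS)$, and the converse follows from the portmanteau theorem, using that any bounded continuous function on $D^\star$ extends to a bounded continuous function on $D(\bbR_+,\SSS)$ (\eg{} by Tietze's theorem, since $D(\bbR_+,\SSS)$ is a Polish, hence normal, space). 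The only step requiring any care is the identification in the first two paragraphs; once $D^\star$ is established as a closed subspace, everything else is standard.
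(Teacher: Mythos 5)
Your proof is correct and follows the same route as the paper: reduce both parts to the fact that $D(\bbR_+,\SSSoi)$ can be identified with a closed subspace of $D(\bbR_+,\SSS)$ carrying the subspace topology. The paper simply cites \cite[Proposition~3.5.3]{EthierKurtz} for this identification, whereas you verify it directly (closedness of $\SSSoi$, the cocountability/right-continuity argument for closedness of the corresponding path space, and then the standard tightness and portmanteau/Tietze reductions); the underlying idea is identical.
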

\begin{proof}
  Immediate, since $\SSSoi$ is a closed subspace of $\SSS$, 
and thus $D(\bbR_+,\SSSoi)$ is a closed subspace of $D(\bbR_+,\SSS)$
with the subspace topology.
(This is a consequence of e.g.\ \cite[Proposition 3.5.3]{EthierKurtz}.)
\end{proof}

\begin{lemma}\label{LSSSi}
Let $X\nn(\cdot)=(X\nn_i(\cdot))\ixoo$ be a sequence of
fragmentation processes with $X\nn(t)\in\SSSi$.
Suppose further that $X(\cdot)$ is a process in $D(\bbR_+,\SSSi)$.
Then
$X\nn(\cdot)\dto X(\cdot)$  in $D(\bbR_+,\SSSi)$
if and only if
$X\nn(\cdot)\dto X(\cdot)$  in $D(\bbR_+,\SSS)$,
and also if and only if $X\nn(\cdot)\dto X(\cdot)$  in $D(\bbR_+,\SSSoi)$.
\end{lemma}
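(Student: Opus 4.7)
The plan is to reduce the triple equivalence to a single pathwise upgrade statement, using continuity of natural inclusions together with Skorohod's representation theorem. First, I would observe from \refL{L0} that the topology on $\SSSi$ coincides with the subspace topology inherited from $\SSSoi$ (and from $\SSS$), so the inclusions $\SSSi \hookrightarrow \SSSoi \hookrightarrow \SSS$ are continuous and induce continuous maps on the Skorohod spaces (the Skorohod topology is functorial in the codomain topology). This yields the easy chain ``$X\nn \dto X$ in $D(\bbR_+,\SSSi) \Rightarrow X\nn \dto X$ in $D(\bbR_+,\SSSoi) \Rightarrow X\nn \dto X$ in $D(\bbR_+,\SSS)$.'' Combined with \refL{LSSSoi}, which supplies the equivalence of the last two convergences for $\SSSoi$-valued processes, the only genuinely nontrivial implication left to prove is: if $X\nn \dto X$ in $D(\bbR_+,\SSSoi)$ with all paths almost surely in $\SSSi$, then $X\nn \dto X$ in $D(\bbR_+,\SSSi)$.

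To prove this, I would apply Skorohod's representation theorem --- available because $D(\bbR_+,\SSSoi)$ is Polish (indeed $\SSSoi$ is compact by \refL{L0}) --- to pass to a coupling on which $X\nn \to X$ almost surely in $D(\bbR_+,\SSSoi)$. It then suffices to show pathwise: for deterministic $X \in D(\bbR_+,\SSSi)$ and $X\nn \in D(\bbR_+,\SSSi)$ with $X\nn \to X$ in $D(\bbR_+,\SSSoi)$, one also has $X\nn \to X$ in $D(\bbR_+,\SSSi)$. Fix $T>0$ and choose Skorohod time-changes $\lambda_n \to \mathrm{id}$ locally uniformly with $X\nn \circ \lambda_n \to X$ uniformly on $[0,T]$ in the product metric (equivalently in $\ell^q$ for any $q\in(1,\infty]$, by \refL{L0}). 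The target is to upgrade this to uniform convergence in the $\ell^1$ metric, which metrizes the $\SSSi$-topology.

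This upgrade is the main obstacle, because the product metric and the $\ell^1$ metric on $\SSSi$ are only topologically (not uniformly) equivalent, so uniform product convergence does not automatically yield uniform $\ell^1$ convergence. The key idea is uniform tail control for the limit $X$: since $X:[0,T]\to\SSSi$ is \cadlag{} into the Polish space $(\SSSi,\ell^1)$, a standard oscillation/covering argument shows that $X([0,T])$ is totally bounded, hence relatively compact in $(\SSSi,\ell^1)$. Relative compactness in $\ell^1$ is equivalent to uniformly small tails, so for every $\eps>0$ there exists a finite $K$ with $\sup_{t\in[0,T]} \sum_{i>K} X_i(t) < \eps$. Combined with uniform coordinatewise convergence for $i\le K$ and the mass conservation $\sum_i X\nn_i(\lambda_n(t)) = 1$, this forces $\sup_{t\in[0,T]} \sum_{i>K} X\nn_i(\lambda_n(t)) < 2\eps$ for all large $n$. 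Splitting the $\ell^1$ difference into a head sum (vanishing by uniform coordinatewise convergence) and a tail sum (controlled by the above) yields $\sup_{t\in[0,T]} \|X\nn(\lambda_n(t)) - X(t)\|_1 \to 0$, which is precisely the required $D(\bbR_+,\SSSi)$-convergence and completes the proof.
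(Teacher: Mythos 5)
Your proof is correct, but it takes a noticeably longer route than the paper's, which is a one-liner: by \refL{L0}, $\SSSi$ carries the subspace topology from $\SSSoi$ and from $\SSS$, and then \cite[Proposition 3.5.3]{EthierKurtz} gives directly that $D(\bbR_+,\SSSi)$ sits inside $D(\bbR_+,\SSSoi)$ and $D(\bbR_+,\SSS)$ with the subspace topology, from which the equivalence of the three modes of convergence in distribution (for random elements that all live in the subspace) is immediate via the portmanteau theorem. What you do instead is prove from scratch, in this specific setting, the nontrivial half of that general embedding fact: your Skorohod-representation and tail-control argument — range of a \cadlag{} $\SSSi$-valued path is relatively compact in $\ell^1$, hence has uniformly small tails, which together with the mass constraint $\sum_i X_i^{(n)}=1$ bootstraps uniform product-metric convergence to uniform $\ell^1$-convergence along the same time changes — is a correct, self-contained verification that convergence in $D(\bbR_+,\SSSoi)$ of paths lying in $D(\bbR_+,\SSSi)$ implies convergence in $D(\bbR_+,\SSSi)$. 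The trade-off is clear: the paper is shorter by citation, your version is more elementary and exhibits the concrete mechanism. Two small remarks. First, your framing of the merely topological (not uniform) equivalence of the product and $\ell^1$ metrics on $\SSSi$ as ``the main obstacle'' is somewhat misleading: the Skorohod topology on $D(\bbR_+,E)$ depends only on the topology of $E$, not the choice of compatible metric, so there is no genuine obstacle — your compactness-of-range argument is in effect a proof of that invariance here. Second, the Skorohod representation step is dispensable: the deterministic upgrade you establish already shows the identity on $D(\bbR_+,\SSSi)$ is continuous from the $\SSSoi$-induced subspace topology to the intrinsic $\ell^1$-Skorohod topology, and the continuous mapping theorem then transfers convergence in distribution without any coupling.
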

\begin{proof}
  Immediate, since $\SSSi$ is a subspace of $\SSSoi$ and $\SSS$, 
and thus $D(\bbR_+,\SSSi)$ is a  subspace of both $D(\bbR_+,\SSSoi)$
and $D(\bbR_+,\SSS)$, with the subspace topology.
(Again by e.g.\ \cite[Proposition 3.5.3]{EthierKurtz}.)
\end{proof}

\begin{lemma}\label{L0}
  \begin{thmenumerate}
  \item \label{L0SSSoi}
On $\SSSoi$,  
the $\ell^q$ metric is equivalent to 
the product metric \eqref{vd1}
for any $q\in(1,\infty]$.
$\SSSoi$ is a compact metric space (for any of these equivalent metrics).

  \item \label{L0SSSi}
On $\SSSi$,  
the $\ell^1$ metric is equivalent to 
the product metric \eqref{vd1}.
(And hence so is the $\ell^q$ metric for any $q\in[1,\infty]$.)
The metric space $(\SSSi, \ell^1)$  is complete and separable, and
thus $\SSSi$ 
is a  Polish space, which is non-compact.
  \end{thmenumerate}
\end{lemma}
\begin{proof}[Sketch of proof]
\pfitemref{L0SSSoi}  
Compactness follows since $\SSSoi$ is a closed subset of $\oi^\infty$.
Convergence in the metric \eqref{vd1}, and thus pointwise, implies
convergence in $\ell^q$ for any $q\in(1,\infty)$ by dominated convergence,
using the fact that $(x_i)_{i\geq 1}\in\SSSoi$ implies $0\le x_i\le 1/i$ for every
$i$.

\pfitemref{L0SSSi}
Elements of $\SSSi$ can be regarded as probability measures on $\bbN$.
Hence convergence in $\SSSi$ in the metric \eqref{vd1}, which as noted above
means pointwise convergence,
implies convergence in $\ell^1$ by 
Scheff{\'e}'s lemma \cite[Theorem 5.6.4]{Gut}.
The converse is obvious, and thus 
the metric $d$ is equivalent to the $\ell^1$ metric on $\SSSi$.
Completeness follows since $\SSSi$ is a closed subset of
the Banach space $\ell^1$, and separability follows since $\SSS$ (or
$\ell^1$) is separable. 

 If we define 
 \begin{align}\label{bad}
x\nn_i:=\frac{1}{n}\indic{i\le n}
\qquad\text{and}\qquad
x\nn:=(x\nn_i)\xoo\in\SSSi,   
 \end{align}
then $x\nn\to0$ in $\SSS$ (and thus in $\SSSoi$) 
in the product metric \eqref{vd1}, but not in $\ell^1$. 
This example shows that
$\SSSi$ is not a closed subset of $\SSSoi$ or $\SSS$, and hence
$\SSSi$ is not compact.
It shows also that
the $\ell^1$ metric is not equivalent to the product metric \eqref{vd1} on $\SSSoi$.
\end{proof}

\begin{remark}
Note that there is no result for $\SSSi$ corresponding to
\refL{LSSSoi}\ref{LSSSoi1}:
even if all $X\nn(t)\in\SSSi$, 
we cannot conclude tightness in $D(\bbR_+,\SSSi)$ without further assumptions.
(This is because $\SSSi$ is not a closed subspace of $\SSSoi$ and $\SSS$.)
A counterexample is provided by $X\nn(t):=x\nn$ in \eqref{bad} for all $t\ge0$,
or 
(if we want $X\nn(0)$ fixed)
by $X\nn(t):=x\nn$ for $t \ge U$ and $X\nn(t):=(1,0,0,\dots)$ for $0\le
t<U$, with $U\sim \Exp(1)$, say.
\end{remark}

\begin{remark}\label{RSSSoo}
  Yet another alternative, used in for example  \cite{BOH2023} 
is to consider fragmentation processes as taking values in the space 
\begin{align}\label{SSSoo}
\SSSoo := \Bigset{(x_i)\xoo\in\SSS: 0\le\sumi x_i < \infty},
\end{align}
equipped with the $\ell^1$ metric.
Note that $\SSSi$ is a closed subspace of $\SSSoo$.
(See \refL{L0}.)
Consequently, analogously to \refL{LSSSi},
for processes taking values in $\SSSi$,
tightness or convergence in $D(\bbR_+,\SSSi)$ 
is equivalent to
tightness or convergence in $D(\bbR_+,\SSSoo)$. 
\end{remark}

\section{Proof of  main results}\label{Spf}

We begin with two simple lemmas.

\begin{lemma}\label{L1}
Suppose that $(y_i)\xoo\in\SSS$ and that $x\ge\sum_{i \geq 1} y_i$. Then
\begin{align}
  \label{l1}
x(x-y_1) \le x^2 -\sumi y_i^2.
\end{align}
\end{lemma}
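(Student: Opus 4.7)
The plan is to rearrange the desired inequality into an equivalent and simpler form. Subtracting $x^2$ from both sides and multiplying by $-1$, the inequality \eqref{l1} is equivalent to
\begin{align*}
\sumi y_i^2 \le x y_1.
\end{align*}
So the whole task reduces to proving this bound.

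To obtain it, I would exploit the monotonicity built into the definition of $\SSS$: since $(y_i)\xoo \in \SSS$ we have $y_i \le y_1$ for every $i\ge 1$, and since all $y_i$ are nonnegative this gives $y_i^2 \le y_1 y_i$ termwise. Summing over $i$ and then applying the hypothesis $\sum_i y_i \le x$ yields
\begin{align*}
\sumi y_i^2 \;\le\; y_1 \sumi y_i \;\le\; y_1 x,
\end{align*}
which is exactly what was needed.

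There is no real obstacle here; the lemma is essentially a one-line consequence of the ordering of the $y_i$ and the given mass bound, and the only thing worth noting is that all quantities involved are well-defined (both $x$ and $y_1$ are finite and nonnegative, and the sums make sense with values in $[0,\infty]$, the inequality being trivial if $\sum y_i^2 = \infty$ is precluded by the bound just derived). I would present the argument in two lines: the algebraic rearrangement, followed by the termwise estimate and summation.
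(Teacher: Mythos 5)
Your proof is correct and takes essentially the same approach as the paper: rearranging \eqref{l1} to $\sum_i y_i^2 \le x y_1$ and establishing it via $\sum_i y_i^2 \le y_1 \sum_i y_i \le y_1 x$, which is exactly the chain of inequalities in the paper's argument.
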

\begin{proof}
  We have
  \begin{align}
    x^2-x(x-y_1) 
= xy_1 \ge \sumi y_iy_1 \ge \sumi y_i^2.
  \end{align}
\end{proof}

The next lemma applies to both deterministic and random fragmentation
processes.

\begin{lemma}\label{L2}
Suppose that $X(\cdot)=(X_i(\cdot)) \ixoo$  is a fragmentation process, and 
let, for $k\ge1$,
\begin{align}\label{qsk}
  S_k(t):=\sumik X_i(t)
\end{align}
be the sum of the $k$ largest masses.
Define
\begin{align}\label{qt}
Q(t):=\sumi [X_i(t)]^2.
\end{align}
Then, for any two times $0\le t_1\le t_2$ 
we have
\begin{align}\label{l2q}
  Q(t_1)\ge Q(t_2)
\end{align}
and, for every $k\ge1$, 
\begin{align}\label{l2}
0\le  S_k(t_1)-S_k(t_2) 
\le
2\sqrt{k\xpar{Q(t_1)-Q(t_2)}}.
\end{align}
 \end{lemma}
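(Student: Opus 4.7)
The strategy is to unpack definition \eqref{preceq} of $\preceq$, apply \refL{L1} bucket-by-bucket, and finish with Cauchy--Schwarz. Throughout, I write $x_i:=X_i(t_1)$ and $y_j:=X_j(t_2)$, and fix $\alpha:\bbN\to\bbN$ with $\sum_{j\in\alpha\qw(i)} y_j \le x_i$ for every $i$; such an $\alpha$ exists since $X(t_2)\preceq X(t_1)$.

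The monotonicity \eqref{l2q} is essentially immediate: for nonnegative reals the elementary bound $\sum a_j^2 \le \bigpar{\sum a_j}^2$ yields $\sum_{j\in\alpha\qw(i)} y_j^2 \le x_i^2$, and summing over $i$ gives $Q(t_2)\le Q(t_1)$. The lower bound $S_k(t_1)\ge S_k(t_2)$ in \eqref{l2} is obtained similarly: if $j_1,\dots,j_k$ index the top $k$ entries of $(y_j)$ and $I:=\alpha(\set{j_1,\dots,j_k})$, then $|I|\le k$, so
\begin{align*}
S_k(t_2)=\sum_{s=1}^k y_{j_s} \le \sum_{i\in I} x_i \le \sum_{i=1}^{|I|} x_i \le S_k(t_1),
\end{align*}
using that $(x_i)$ is decreasing.

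For the upper bound in \eqref{l2}, I set $\tilde y_i:=\max_{j\in\alpha\qw(i)} y_j$, with the convention $\tilde y_i:=0$ when $\alpha\qw(i)=\emptyset$. Applying \refL{L1} inside each bucket (with the fragments sorted decreasingly) gives $x_i(x_i-\tilde y_i)\le x_i^2-\sum_{j\in\alpha\qw(i)} y_j^2$, and since $0\le x_i-\tilde y_i\le x_i$ we have $(x_i-\tilde y_i)^2 \le x_i(x_i-\tilde y_i)$. Summing and using that the buckets partition $\bbN$,
\begin{align*}
\sum_{i=1}^\infty (x_i-\tilde y_i)^2 \le \sum_{i=1}^\infty x_i^2 - \sum_{j=1}^\infty y_j^2 = Q(t_1)-Q(t_2).
\end{align*}
Cauchy--Schwarz then yields $\sum_{i=1}^k (x_i-\tilde y_i) \le \sqrt{k\xpar{Q(t_1)-Q(t_2)}}$.

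It remains to compare this to $S_k(t_1)-S_k(t_2)$. Since the preimages $\alpha\qw(i)$ are pairwise disjoint, $\tilde y_1,\dots,\tilde y_k$ select at most $k$ distinct entries of $(y_j)$, and any such subcollection has sum bounded by the top-$k$ sum, so $\sum_{i=1}^k \tilde y_i \le S_k(t_2)$. Hence
\begin{align*}
S_k(t_1)-S_k(t_2)\le S_k(t_1)-\sum_{i=1}^k \tilde y_i =\sum_{i=1}^k (x_i-\tilde y_i)\le\sqrt{k\xpar{Q(t_1)-Q(t_2)}},
\end{align*}
which is slightly sharper than the stated \eqref{l2}. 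The only real subtlety is this last bookkeeping step that lower-bounds $S_k(t_2)$ by a sum of one maximal fragment drawn from each of the top $k$ buckets; everything else reduces to the bucket-wise application of \refL{L1} together with $(x-\tilde y)^2\le x(x-\tilde y)$ and Cauchy--Schwarz.
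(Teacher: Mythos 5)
Your proof is correct, and the overall architecture matches the paper's: you use the same preimage buckets $J_i=\alpha^{-1}(i)$, the same partial maxima $\tilde y_i$ (the paper's $Y_i$), the same bucket-by-bucket application of \refL{L1}, and the same bookkeeping step that the disjointness of the buckets gives $\sum_{i\le k}\tilde y_i\le S_k(t_2)$. Where you diverge is in the final estimate. The paper, from $\sum_i x_i(x_i-\tilde y_i)\le Q(t_1)-Q(t_2)$, introduces a threshold $a>0$, splits the index set into $\{i:x_i\ge a\}$ and $\{i\le k:x_i<a\}$, bounds the two parts by $a^{-1}(Q(t_1)-Q(t_2))$ and $ka$ respectively, and optimizes over $a$ to get $2\sqrt{k(Q(t_1)-Q(t_2))}$. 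You instead observe that $0\le x_i-\tilde y_i\le x_i$ gives $(x_i-\tilde y_i)^2\le x_i(x_i-\tilde y_i)$, so $\sum_i(x_i-\tilde y_i)^2\le Q(t_1)-Q(t_2)$, and then apply Cauchy--Schwarz directly. This is cleaner and yields the sharper constant $1$ in place of $2$; the paper's threshold argument is essentially an elementary surrogate for this $\ell^1$--$\ell^2$ bound, and your version removes the factor of $2$ that the surrogate loses. (You also derive \eqref{l2q} more directly from $\sum y_j^2\le(\sum y_j)^2$ rather than as a byproduct of the \refL{L1} summation, which is equally fine.)
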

 \begin{proof}
By our definition of a fragmentation process,
we have $(X_i(t_2))\xoo \preceq (X_i(t_1))\xoo$.
Let $\ga:\bbN\to\bbN$ be a corresponding function as in the definition
\eqref{preceq} and let $J_i:=\ga\qw(i)$, $i\ge1$.
Then $(J_i)_{i \geq 1}$ is a partition of $\bbN$, and
\begin{align}\label{vd2}
\sum_{j\in J_i} X_j(t_2)\le X_i(t_1).  
\end{align}

For the first inequality in \eqref{l2}, let
$I_k:=\set{\ga(j):j\le k}$.
Then $\set{1,\dots,k}\subseteq\bigcup_{i\in I_k}J_i$, and thus by summing
\eqref{vd2} for $i\in I_k$,
\begin{align}\label{vd22}
  S_k(t_2) 
\le \sum_{i\in I_k} \sum_{j\in J_i} X_j(t_2)
\le \sum_{i\in I_k}  X_i(t_1)
.\end{align}
The set $I_k$ has at most $k$ elements, and thus the 
\rhs{} of \eqref{vd22} is the sum of at most $k$ masses at time $t_1$,
whence it is at most $S_k(t_1)$. 
Hence $S_k(t_2)\le S_k(t_1)$ as we wanted to show.

We turn to the second inequality in \eqref{l2}.
Let, for each $i\ge1$,
\begin{align}\label{vd3}
    \YY_i := \max_{j\in J_i} X_j(t_2),
\end{align}
interpreted as $0$ if $J_i=\emptyset$.
Note that the sets $J_i$ are disjoint; thus $\YY_1,\dots,\YY_k$ are the
masses of $k$ different fragments at time $t_2$, and thus
$\sumik \YY_i \le S_k(t_2)$.
Consequently,
\begin{align}\label{vd4}
  S_k(t_1)-S_k(t_2) \le S_k(t_1) -\sumik \YY_i = \sumik (X_i(t_1)-\YY_i).
\end{align}
Furthermore, 
by \refL{L1} applied to $(X_j(t_2):j\in J_i)$, 
using \eqref{vd2} and \eqref{vd3},
\begin{align}\label{vd5}
  X_i(t_1)\bigpar{X_i(t_1)-\YY_i}
\le X_i(t_1)^2-\sum_{j\in J_i} X_j(t_2)^2.
\end{align}
Summing over all $i$ yields
\begin{align}\label{vd6}
0 \leq \sumi  X_i(t_1)\bigpar{X_i(t_1)-\YY_i}
\le \sumi X_i(t_1)^2-\sumi\sum_{j\in J_i} X_j(t_2)^2 
=Q(t_1)-Q(t_2).
\end{align}
This implies \eqref{l2q}. Moreover, 
for $a>0$, \eqref{vd6} implies
\begin{align}\label{vd7}
\sum_{i:X_i(t_{1})\ge a} \bigpar{X_i(t_1)-\YY_i}
\le
a\qw\sum_{i:X_i(t_{1})\ge a}  X_i(t_1)\bigpar{X_i(t_1)-\YY_i}
\le
a\qw\bigpar{Q(t_1)-Q(t_2)}.
\end{align}
Furthermore, obviously
\begin{align}\label{vd8}
\sum_{i\le k:X_i(t_{1})< a} \bigpar{X_i(t_1)-\YY_i}
\le
\sum_{i\le k:X_i(t_{1})< a} X_i(t_1)
\le ka.
\end{align}
Consequently, by combining \eqref{vd7} and \eqref{vd8}, for every $a>0$,
\begin{align}\label{vd9}
\sumik \bigpar{X_i(t_1)-\YY_i}
\le
a\qw\bigpar{Q(t_1)-Q(t_2)}+ka.
\end{align}
Optimizing by choosing
$a:=\sqrt{(Q(t_1)-Q(t_2))/k}$ yields
the second inequality in \eqref{l2}.
\end{proof}

\begin{proof}[Proof of \refT{T1}]
Note first that $Q\nn(t)\le Q\nn(0)<\infty$ for every $t$  by \eqref{l2q}.
Let, similarly to \eqref{qsk}, $S_k\nn(t):=\sumik X_i\nn(t)$.
Let stopping times $\tau_n$ and positive $h_n$ be as in the statement.
Then \eqref{qn2p} holds by assumption
and thus \refL{L2} implies that for every fixed $k\ge1$,
\begin{align}\label{kq1}
  S_k\nn(\tau_n) - S_k\nn(\tau_n+h_n)\pto0.
\end{align}
Since $X\nn_i(t)=S\nn_i(t)-S\nn_{i-1}(t)$ (with $S\nn_0(t):=0$),
it follows that
\begin{align}\label{kq2}
  \bigabs{X_i\nn(\tau_n) - X_i\nn(\tau_n+h_n)}\pto0
\end{align}
for every $i$.
The definition \eqref{vd1} implies that for every $k\ge1$
\begin{align}\label{kq3}
  d\bigpar{X\nn(\tau_n),X\nn(\tau_n+h_n)}
\le \sumik 2^{-i}   \bigabs{X_i\nn(\tau_n) - X_i\nn(\tau_n+h_n)}
+2^{-k},
\end{align}
and consequently it follows easily from \eqref{kq2} that
\begin{align}\label{kq4}
  d\bigpar{X\nn(\tau_n),X\nn(\tau_n+h_n)}
\pto0.
\end{align}
This shows that $X\nn(\cdot)$ satisfies Aldous's criterion
\cite{Aldous}.
We use the version in \cite[Theorem 16.11]{Kallenberg},
which is stated for processes with values in
an arbitrary complete separable metric space, and 
together with our assumption \ref{T1i} implies
that the family $X\nn(\cdot)$ is tight in $D(\bbR_+,\SSS)$
by a simple argument using \cite[Theorem A2.2]{Kallenberg}.
(See the proof of \cite[Theorem 16.10]{Kallenberg},
which does this assuming finite-dimensional convergence.)
\end{proof}

\begin{proof}[Proof of \refC{CT1}]
We have $Q\nn(\tau_n)-Q\nn(\tau_n+h_n)\ge0$ by \eqref{l2q} in \refL{L2}.
Thus \eqref{qn2}  is equivalent to 
$\E|Q\nn(\tau_n)-Q\nn(\tau_n+h_n)|\to0$, which implies \eqref{qn2p}.
\end{proof}

\begin{proof}[Proof of \refT{T2}]
The family $X^{(n)}(\cdot)$ is tight in $D(\bbR_+,\SSS)$ by \refT{T1} 
(or \refC{CT1}),
which together with the assumed finite-dimensional convergence implies
convergence in distribution in $D(\bbR_+,\SSS)$, see e.g.\ \cite[Theorem
13.1]{Billingsley2ed}. 
\end{proof}

\section{An application: fragmentation of a tree}
\label{Stree}

Consider now the special case of a fragmentation process of a (deterministic or random) tree, 
where edges are deleted sequentially one by one in random order. 
More precisely, we assume that we are given a random tree
$\cT\nn$ of size (number of vertices) $n$ for each $n\ge1$
and 
we assume that conditioned on the tree, each edge $e$ is deleted at some
random time $T_e$, independent of all other edges.
The results extend to trees with arbitrary sizes $|\cT\nn|\to\infty$
with only notational changes.
We further assume, for convenience, that the random times
$T_e$ are i.i.d.\ with a common $\Exp(r_n)$ distribution, for some given
rates $r_n$. 
(The argument works with a minor modification also for $T_e\sim U(0,t_n)$
with $t_n\to\infty$; 
alternatively this case follows easily from the exponential case studied below,
see \refC{CoroFixI}.)

Let $\cT\nn(t)$ be the fragmented forest at time $t$, so $\cT\nn(0)=\cT\nn$,
and denote its components by $\cT\nn_i(t)$, $i=1,\dots,\nu\nn(t)$,
arranged in order of decreasing size. Here, $\nu\nn(t)$ denotes the number of components at time $t$. 
We define a fragmentation process $X\nn(\cdot)=(X_i\nn(\cdot))\ixoo$
as the normalized component sizes, i.e.,
formally by  
$X\nn_i(t):=|\cT\nn_i(t)|/n$, with $X\nn_i(t):=0$ for $i>\nu\nn(t)$.
Note that $X\nn(t)\in\SSSi$ for all $t\ge0$, with $X\nn(0)=(1,0,0,\dots)$.

\begin{theorem}\label{TT}
  Let\/ $\cT\nn(\cdot)$ and $X\nn(\cdot)$ be as above 
and suppose that, with $Q\nn(t)$ given by \eqref{qn},
\begin{align}\label{uj4}
 \limsup_{\ntoo}\bigpar{1- \E Q\nn(t)}\to0
\qquad\text{as $t\downto 0$}.
\end{align}
Then,  the family of processes
$X\nn(\cdot)$ is tight in $D(\bbR_+,\SSSoi)$.
\end{theorem}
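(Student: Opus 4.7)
My plan is to verify the hypothesis \eqref{qn2} of \refC{CT1} and then invoke \refL{LSSSoi}\ref{LSSSoi1} to transfer tightness from $D(\bbR_+,\SSS)$ to $D(\bbR_+,\SSSoi)$. Writing $p_n(t) := 1 - \E Q\nn(t)$, the monotonicity of $Q\nn$ from \refL{L2} makes $p_n$ increasing, so \eqref{uj4} forces $p_n(h_n)\to 0$ for any sequence $h_n\to 0$. It therefore suffices to prove the key inequality
\begin{align}\label{plancl}
 \E\bigl[Q\nn(\tau_n)-Q\nn(\tau_n+h_n)\bigr] \le 1-\E Q\nn(h_n),
\end{align}
uniformly in $X\nn$-stopping times $\tau_n$ and $h_n>0$.

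The first step is a strong-Markov decomposition. Because $X\nn$ is a measurable function of $\cT\nn$, every $X\nn$-stopping time is also a stopping time for the continuous-time Markov process $(\cT\nn(t))_{t\ge 0}$; by the memoryless property of the $\Exp(r_n)$ edge clocks, conditional on $\cT\nn(\tau_n)$ each component $T_i:=\cT\nn_i(\tau_n)$ of size $s_i$ fragments independently like a fresh process starting at time $0$. Writing $Q^T(h)$ for the intrinsically normalized sum of squared fragment fractions of a tree $T$ after time $h$, this yields
\begin{align}
 Q\nn(\tau_n)-Q\nn(\tau_n+h_n)=\sum_i (s_i/n)^2 \bigl(1-Q^{T_i}(h_n)\bigr).
\end{align}
The second step is a path identity: expanding $Q^T(h)=m^{-2}\sum_{j,k}\indic{j\sim_h k}$ (where $j\sim_h k$ means the two vertices lie in a common component at time $h$) and using independence of the edge clocks gives
\begin{align}
 \E\bigl[1-Q^T(h)\bigr]=m^{-2}\sum_{j\ne k\,\in\, T}\bigl(1-e^{-r_n h\, d_T(j,k)}\bigr)
\end{align}
for any finite tree $T$ of size $m$, where $d_T$ denotes graph distance in $T$.

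The crucial observation is that each $T_i$ is a \emph{connected} subtree of $\cT\nn$, so $d_{T_i}(j,k)=d_{\cT\nn}(j,k)$ for $j,k\in T_i$. Substituting this into the decomposition, the conditional expected drop becomes
\begin{align}
 n^{-2}\sum_i\sum_{\substack{j\ne k\\ j,k\in T_i}}\bigl(1-e^{-r_n h_n d_{\cT\nn}(j,k)}\bigr),
\end{align}
which, since $\{T_i\}$ partitions the vertex set of $\cT\nn$, is dominated by the corresponding sum over \emph{all} pairs $j\ne k$ in $\cT\nn$; that full sum equals $n^2\,\E[1-Q\nn(h_n)\mid\cT\nn]$. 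Taking an unconditional expectation yields \eqref{plancl}, verifying the hypothesis of \refC{CT1}. The main obstacle is precisely this distance-preservation step: it is what allows the drop over an arbitrary future increment $(\tau_n,\tau_n+h_n]$ to be controlled by the drop of the original process over $(0,h_n]$, which is the only quantity \eqref{uj4} actually provides.
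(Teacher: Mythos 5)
Your proof is correct, and it reaches the same pivotal inequality
$\E\bigl[Q\nn(\tau_n)-Q\nn(\tau_n+h_n)\bigr] \le 1-\E Q\nn(h_n)$ as the paper's (the paper's~\eqref{uj2}), but by a genuinely different route. The paper proves this estimate by an \emph{edge-by-edge} decomposition: it writes the total drop of $Q\nn$ over $(\tau_n,\tau_n+h_n]$ as the telescoping sum $\sum_e\bigl(Q\nn(T_e-)-Q\nn(T_e)\bigr)\indic{\tau_n<T_e\le\tau_n+h_n}$, expands each edge's drop as a sum over vertex pairs whose path passes through $e$ and is still intact at $T_e-$, and then for each fixed edge uses conditional monotonicity in $T_e$ together with memorylessness to shift the interval $(\tau_n,\tau_n+h_n]$ back to $(0,h_n]$. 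You instead decompose at the \emph{component} level: condition on $\cF_{\tau_n}$, invoke the strong Markov/memoryless property to view each surviving component $T_i$ as starting a fresh fragmentation, and then use the exact identity $\E[1-Q^T(h)]=m^{-2}\sum_{j\ne k}\bigl(1-e^{-r_n h\, d_T(j,k)}\bigr)$ (which is essentially the computation the paper performs separately in Theorem~\ref{Td}, equations~\eqref{sof1}--\eqref{sof2}) together with the observation that distances within a connected subtree agree with distances in $\cT\nn$ and that the pairs internal to the partition $\{T_i\}$ form a subset of all pairs. Your argument is arguably more transparent: it makes explicit exactly where memorylessness enters (once, via the strong Markov property at $\tau_n$), and the domination step is a clean set-inclusion; the paper's single-edge argument is terser and buries the same memorylessness in a more delicate conditional comparison. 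The rest of your plan — deducing $1-\E Q\nn(h_n)\to0$ from~\eqref{uj4} via the monotonicity of $Q\nn$, applying Corollary~\ref{CT1}, and transferring to $D(\bbR_+,\SSSoi)$ by Lemma~\ref{LSSSoi} — matches the paper.
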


\begin{proof}
When an edge $e$ is deleted at the
time $T_e$, some component $\bft$ of $\cT\nn(T_e-)$ is split into two $\bft'$
and $\bft''$. Thus, assuming as we may that all $T_e$ are distinct, 
\begin{align}\label{uj1}
  Q\nn(T_e-)&-Q\nn(T_e) 
=\frac{1}{n^2}\bigpar{|\bft|^2-|\bft'|^2-|\bft''|^2}
=\frac{2}{n^2}|\bft'||\bft''|
\notag\\&
=
\frac{1}{n^2}\sum_{v,w\in \cT\nn}\indic{\text{$v$ and $w$ are connected in
  $\cT\nn(T_e-)$ but not in $\cT\nn(T_e)$}}
\notag\\&
=
\frac{1}{n^2}\sum_{v,w\in \cT\nn}\etta\bigl\{
\text{$e$ lies in the path between $v$ and $w$ in $\cT\nn(0)$},
\notag\\&\hskip9 em\text{and this path remains  intact in $\cT\nn(T_e-)$}\bigr\}.
\end{align}
Conditioning on $\cT^{(n)}$ and all $T_{e'}$, $e'\neq e$, 
each term in the final sum in  \eqref{uj1} is a decreasing
function of $T_e$, and since $T_e$ 
is independent of the past (unless it already has occured), 
it follows that for any stopping time $\tau_n$
and constant $h_n>0$, 
\begin{multline}
\E\Bigpar{ \bigpar{Q\nn(T_e-)-Q\nn(T_e)}\indic{\tau_n< T_e\le \tau_n+h_n}   }
\\
\le
\E\Bigpar{\bigpar{Q\nn(T_e-)-Q\nn(T_e)}\indic{0< T_e\le h_n}   }.
\end{multline}
Summing over all edges $e$ yields
\begin{align}\label{uj2}
  \E \bigsqpar{Q\nn(\tau_n)-Q\nn(\tau_n+h_n)}
\le
  \E \bigsqpar{Q\nn(0)-Q\nn(h_n)}
=1 - \E Q\nn(h_n).
\end{align}
Hence, the condition \eqref{qn2} reduces to
\begin{align}\label{uj3}
  \E Q\nn(h_n)\to1
\qquad\text{for any sequence $h_n\to0$}.
\end{align}
This follows easily from \eqref{uj4}.
In fact, let $\eps>0$. Then \eqref{uj4} shows that there exists $t_0>0$ such
that $\limsup_{\ntoo}(1-\E Q\nn(t_0))<\eps$.
Hence, for some $n_0$, if $n>n_0$ then $1-\E Q\nn(t_0)<\eps$.
Consequently, if $n$ is so large that $n\ge n_0$ and $h_n<t_0$, then
$\E Q\nn(h_n)\ge \E Q\nn(t_0)>1-\eps$, and thus \eqref{uj3} follows.
(In fact, it is easy to see that 
\eqref{uj3} and \eqref{uj4} are equivalent, and also 
$ \sup_{n\ge 1}\bigpar{1- \E Q\nn(t)}\to0$ as $t\downto 0$,
using that trivially $\cT\nn(t)\to\cT\nn(0)$ 
and thus $Q\nn(t)\to 1$ 
as $t\downto0$,
for every fixed $n$.)

Thus, by Corollary \ref{CT1}, the sequence of processes $X_n(\cdot)$ is tight in
$D(\bbR_+,\SSS)$. This, in turn, implies tightness in $D(\bbR_+,\SSSoi)$
by \refL{LSSSoi}, 
since $X_n(\cdot)\in\SSSoi$.
\end{proof}

\begin{theorem}\label{TT2}
  Let\/ $\cT\nn(\cdot)$ and $X\nn(\cdot)$ be as above 
and suppose that we have finite-dimensional convergence
$X\nn(\cdot)\fdto X(\cdot)$ to some stochastic process $X(\cdot)$ in $D(\bbR_+,\SSS)$.
Then
$X\nn(\cdot)\dto X(\cdot)$ in $D(\bbR_+,\SSSoi)$. 
\end{theorem}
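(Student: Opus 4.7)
The strategy is to reduce \refT{TT2} to \refT{TT}: verify the hypothesis \eqref{uj4} from the finite-dimensional convergence alone, apply \refT{TT} to obtain tightness in $D(\bbR_+,\SSSoi)$, and combine with the finite-dimensional convergence to conclude.

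The key step is to upgrade the pointwise-in-$t$ weak limit to a limit of $\E Q\nn(t)$. Each $X\nn(t)\in\SSSoi$ and $\SSSoi$ is closed in $\SSS$, so $X(t)\in\SSSoi$ a.s.\ for every fixed $t$, and extending along rationals via right-continuity shows $X\in D(\bbR_+,\SSSoi)$ a.s. Now $\SSSoi$ is compact in the product topology, and by \refL{L0}\ref{L0SSSoi} this topology coincides with the $\ell^2$ topology there; consequently the map $x\mapsto\norm{x}_2^2$ is continuous and bounded by $1$. Hence, at every continuity point $t$ of $X$, the continuous mapping theorem together with bounded convergence yields
\begin{align*}
  \E Q\nn(t) \tend \E Q(t), \qquad\text{where } Q(t):=\sumi X_i(t)^2.
\end{align*}

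Finally I would identify $X(0)$. Since $X\nn(0)=(1,0,0,\dots)$ is deterministic, finite-dimensional convergence at $t=0$ forces $X(0)=(1,0,0,\dots)$ a.s., so $Q(0)=1$; right-continuity of $X$ at $0$ in $\SSSoi$ (equivalently, $\ell^2$-continuity, by the compactness above) then gives $Q(t)\to 1$ a.s., and bounded convergence promotes this to $\E Q(t)\to 1$ as $t\downto 0$. Combining,
\begin{align*}
  \limsup_{\ntoo}\bigpar{1-\E Q\nn(t)} = 1-\E Q(t) \tend 0
\end{align*}
along continuity points $t\downto 0$, and the monotonicity of $Q\nn$ (\refL{L2}) extends this to all $t\downto 0$ via the bound $1-\E Q\nn(t)\le 1-\E Q\nn(t')$ for any continuity point $t'\ge t$. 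This establishes \eqref{uj4}, so \refT{TT} supplies tightness in $D(\bbR_+,\SSSoi)$, and tightness plus the finite-dimensional convergence gives $X\nn\dto X$ in $D(\bbR_+,\SSSoi)$ (e.g.\ via \cite[Theorem 13.1]{Billingsley2ed}). The main obstacle is the upgrade $\E Q\nn(t)\to\E Q(t)$: it genuinely depends on the compactness of $\SSSoi$ to convert product-topology convergence into $\ell^2$-convergence, without which the counterexample \eqref{bad} shows $Q\nn(t)\not\to Q(t)$ in general.
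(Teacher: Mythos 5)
Your proof is correct and follows essentially the same route as the paper: verify hypothesis \eqref{uj4} by upgrading the fixed-$t$ weak limits to $\E Q\nn(t)\to\E Q(t)$ via the $\ell^2$-equivalence on $\SSSoi$ (\refL{L0}) and bounded convergence, identify $Q(0)=1$, use right-continuity of $X$ to get $\E Q(t)\to 1$, and then apply \refT{TT} plus the standard tightness-plus-fdd argument. Your extra care in restricting to a.s.-continuity points of $X$ and then extending by monotonicity of $Q\nn$ is a harmless refinement that the paper sidesteps by its convention that finite-dimensional convergence holds at every $t$.
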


\begin{proof}
For each fixed $t\ge0$ we have by assumption $X\nn(t)\dto X(t)$,
where $X\nn(t)\in\SSSi\subseteq\SSSoi$. Since $\SSSoi$ is a closed subset of
$\SSS$, this implies $X(t)\in\SSSoi$ a.s.\
and
$X\nn(t)\dto X(t)$ in $\SSSoi$.
Moreover, by \refL{L0}, the $\ell^2$ metric is equivalent to the metric
\eqref{vd1} on $\SSSoi$ and thus defines the topology; 
in particular, the $\ell^2$ norm is a continuous function on
$\SSSoi$, and thus we obtain, for every fixed $t\ge0$,
\begin{align}\label{sw4}
  Q\nn(t)=\sumi\bigpar{X\nn_i(t)}^2\dto 
  \sumi\bigpar{X_i(t)}^2
=Q(t).
\end{align}
Since $Q\nn(t)\le1$, we obtain by \eqref{sw4} and dominated convergence, 
\begin{align}\label{sw5}
\E  Q\nn(t)
\to\E Q(t).
\end{align}

By definition, 
$X\nn(0)=(1,0,0,\dots)$ and thus $Q\nn(0)=1$ for every $n$.
Hence \eqref{sw4} implies $Q(0)=1$ a.s.
We assume that $X(\cdot)\in D(\bbR_+,\SSS)$, and in particular $X(t)\to X(0)$
a.s.\ as $t\downto0$.
By the same argument as above, this implies $X(t)\to X(0)$ in $\ell^2$ and
$Q(t)\to Q(0)$ a.s., and thus $\E Q(t)\to\E Q(0)=1$ as $t\downto0$.
Consequently, using \eqref{sw5},
\begin{align}
\lim_{t\to0} \limsup_{\ntoo}\bigpar{1- \E Q\nn(t)}
=
\lim_{t\to0} \bigpar{1- \E Q(t)}
=0,
\end{align}
which shows that \eqref{uj4} holds.
Thus \refT{TT} applies and yields tightness of the family $X^{(n)}(\cdot)$ in
$D(\bbR_+,\SSSoi)$.  
This and the finite-dimensional convergence yield 
convergence in $D(\bbR_+,\SSSoi)$ (as in the proof of \refT{T2}).
\end{proof}

We may also sometimes estimate $Q\nn(t)$ directly.
\begin{theorem}\label{Td}
With notation as in \refT{TT}, 
let $d\nn(v,w)$ denote the graph distance between vertices $v$ and $w$ in
$\cT\nn$. 
Then,  for every $t\ge0$,
\begin{align}
  \label{sof3}
1-\E Q\nn(t)
\le
t\,r_n \E d\nn(V_1,V_2),
\end{align}
where $V_1$ and $V_2$ are two independent uniformly random vertices of\/ 
$\cT\nn$.

Consequently, if 
\begin{align}
  \label{sof4}
r_n \E d\nn(V_1,V_2)=O(1),
\end{align}
then 
the family of processes $X\nn(\cdot)$ is tight in $D(\bbR_+,\SSSoi)$.
\end{theorem}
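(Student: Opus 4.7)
\medskip
\noindent\emph{Proof plan.}
The plan is to interpret $Q\nn(t)$ probabilistically by unpacking its definition. Writing $Q\nn(t)=n^{-2}\sumi |\cT\nn_i(t)|^2$ and expanding $|\cT\nn_i(t)|^2$ as a double sum over pairs of vertices in the $i$-th component, one finds
\begin{align}
Q\nn(t) = \P\bigpar{V_1 \text{ and } V_2 \text{ lie in the same component of } \cT\nn(t) \,\bigm|\, \cT\nn, \{T_e\}},
\end{align}
where $V_1,V_2$ are independent uniform vertices of $\cT\nn$ (independent of the edge-deletion times). Hence $1-\E Q\nn(t)$ equals the (unconditional) probability that $V_1$ and $V_2$ are in different components at time $t$.

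Next I would condition on $\cT\nn$ and on $V_1,V_2$. The vertices $V_1$ and $V_2$ get disconnected by time $t$ if and only if at least one of the $d\nn(V_1,V_2)$ edges on the unique tree-path between them has $T_e\le t$. Since the $T_e$ are \iid{} $\Exp(r_n)$ and independent of $(\cT\nn,V_1,V_2)$, this conditional probability equals
\begin{align}
1-e^{-r_n t\, d\nn(V_1,V_2)} \le r_n t\, d\nn(V_1,V_2),
\end{align}
using the elementary bound $1-e^{-x}\le x$ for $x\ge0$. Taking expectations gives \eqref{sof3}.

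For the consequence, if \eqref{sof4} holds then \eqref{sof3} yields $1-\E Q\nn(t)\le C t$ for some constant $C$ uniformly in $n$, so $\limsup_{\ntoo}(1-\E Q\nn(t))\to 0$ as $t\downto 0$, verifying \eqref{uj4}. Tightness in $D(\bbR_+,\SSSoi)$ then follows directly from \refT{TT}. I do not foresee a real obstacle here; the only thing to be careful about is the order of conditioning (vertices and tree first, then edge-deletion times) so as to apply independence of the $T_e$ cleanly.
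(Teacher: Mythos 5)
Your proof is correct and follows essentially the same route as the paper: interpret $Q\nn(t)$ as the conditional probability that two independent uniform vertices remain connected, then bound the disconnection probability along the tree-path by $r_n t\, d\nn(V_1,V_2)$, and finally invoke Theorem~\ref{TT}. The only cosmetic difference is that you use independence to write the disconnection probability exactly as $1-e^{-r_n t\, d\nn(V_1,V_2)}$ and then apply $1-e^{-x}\le x$, whereas the paper reaches the same bound via a union bound over the edges on the path; both give $r_n t\, d\nn(V_1,V_2)$.
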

\begin{proof}
  Fix a time $t\ge0$.
Then, similary to \eqref{uj1},
\begin{align}\label{sof1}
Q\nn(t) &
=
\frac{1}{n^2}\sum_{v,w\in \cT\nn}\indic{\text{$v$ and $w$ are connected in
  $\cT\nn(t)$}}
\notag\\&
=
\frac{1}{n^2}\sum_{v,w\in \cT\nn}\indic{
\text{$T_e>t$ for every $e$ in the path between $v$ and $w$ in $\cT\nn$}},
\end{align}
and thus
\begin{align}\label{sof2}
&1-\E\bigpar{Q\nn(t) \mid \cT\nn}
=\E\bigpar{Q\nn(0)-Q\nn(t) \mid \cT\nn}
\notag\\&\hskip2em
=
\frac{1}{n^2}\sum_{v,w\in \cT\nn}\P\bigpar{
\text{$T_e\le t$ for some $e$ in the path between $v$ and $w$ in $\cT\nn$} \mid \cT\nn} 
\notag\\&\hskip2em
\le
\frac{1}{n^2}\sum_{v,w\in \cT\nn} r_n t d\nn(v,w)
= r_n t \E \bigpar{d\nn(V_1,V_2)\mid\cT\nn},
\end{align} 
which shows \eqref{sof3}.
The final statement follows from \eqref{sof3} and \refT{TT}.
\end{proof}

The following corollary shows that  
we can replace the condition \eqref{sof4} on the average distance
by corresponding conditions on the diameter or (in a rooted tree) 
the total path-length;
it shows  also that 
we then can weaken the condition from bounded in expectation to bounded in
probability. 
If $\cT\nn$ is  a rooted tree, with root denoted by $\rho$,
then its \emph{total path length} is defined as
\begin{align}\label{tpl}
\TPLn:=
\TPL(\cT\nn):=
\sum_{v\in\cT\nn} d\nn(\rho,v)=n\E\bigpar{d\nn(\rho,V)\mid\cT\nn},  
\end{align}
where $V$ is a uniformly random vertex in $\cT\nn$.

For convenience in applications, we state the corollary with
several versions of the conditions
although some are redundant.

\begin{corollary}\label{CD}
Let $\cT\nn$, $r_n$, and $X\nn(\cdot)$ be as above.
Let $D\nn$ be the diameter of $\cT\nn$ and,
provided the tree $\cT\nn$ is rooted,
let $\TPLn$ be its total pathlength.
Suppose that one of the following conditions holds:
\begin{romenumerate}
  
\item \label{CDed}
$r_n\E D\nn=O(1)$.

\item \label{CDod}
$r_nD\nn =\Op(1)$.
(I.e.,  the sequence $r_nD\nn$ is tight.) 
\item \label{CDel}
$r_n\E\TPLn =O(n)$.

\item \label{CDol}
$r_n\TPLn =\Op(n)$.
(I.e.,  the sequence $\frac{r_n}{n}\TPLn$ is tight.) 
\end{romenumerate}
Then,  the family of processes
$X\nn(\cdot)$ is tight in $D(\bbR_+,\SSSoi)$.
\end{corollary}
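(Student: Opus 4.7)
The plan is to reduce all four cases to verifying the hypothesis \eqref{uj4} of \refT{TT}. The crucial input is the deterministic conditional inequality
\begin{align*}
\E\bigpar{1-Q\nn(t)\mid\cT\nn} \le t\,r_n\,\E\bigpar{d\nn(V_1,V_2)\mid\cT\nn},
\end{align*}
established in \eqref{sof2} during the proof of \refT{Td}. This handles the expectation cases \ref{CDed} and \ref{CDel} directly via \refT{Td} after two elementary estimates: the trivial bound $d\nn(V_1,V_2)\le D\nn$ for \ref{CDed}, and for \ref{CDel}, the root-triangle bound $d\nn(V_1,V_2)\le d\nn(V_1,\rho)+d\nn(V_2,\rho)$, which via \eqref{tpl} yields $\E\bigpar{d\nn(V_1,V_2)\mid\cT\nn}\le 2\TPLn/n$. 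In both cases one obtains $r_n\E d\nn(V_1,V_2)=O(1)$, i.e.\ condition \eqref{sof4}, so \refT{Td} applies.

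For the ``in probability'' cases \ref{CDod} and \ref{CDol} one cannot pass to expectations, so I plan to return to \refT{TT} and verify \eqref{uj4} by a truncation argument. Setting $\tY_n:=r_nD\nn$ in \ref{CDod} and $\tY_n:=2r_n\TPLn/n$ in \ref{CDol}, the same pointwise bounds give $r_n\E\bigpar{d\nn(V_1,V_2)\mid\cT\nn}\le\tY_n$, where by hypothesis $(\tY_n)$ is tight. Combining this with the trivial bound $1-Q\nn(t)\le 1$ and taking outer expectations,
\begin{align*}
1-\E Q\nn(t)\le\E\min\bigpar{1,t\tY_n}.
\end{align*}
Given any $\eps>0$, tightness produces $M$ with $\P(\tY_n>M)<\eps$ for all $n$, and then $\E\min\bigpar{1,t\tY_n}\le tM+\eps$. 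Sending $t\downto 0$ and then $\eps\downto 0$ yields \eqref{uj4}, and \refT{TT} gives the claimed tightness in $D(\bbR_+,\SSSoi)$.

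The main obstacle is exactly the in-probability cases \ref{CDod} and \ref{CDol}: the hypotheses provide no finite bound on $\E d\nn(V_1,V_2)$, so \refT{Td} is unavailable. The resolution hinges on the observation that $Q\nn(t)\in[0,1]$, which lets one replace the possibly divergent quantity $tr_n\E\bigpar{d\nn(V_1,V_2)\mid\cT\nn}$ by the truncated expression $\min(1,t\tY_n)$, whose expectation is then easily controlled by splitting on the tightness threshold $M$.
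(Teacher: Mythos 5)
Your proof is correct. Cases \ref{CDed} and \ref{CDel} are handled exactly as in the paper: the bound $d\nn(V_1,V_2)\le D\nn$ or the root-triangle estimate via \eqref{tpl} yields \eqref{sof4}, and \refT{Td} applies. For the in-probability cases \ref{CDod} and \ref{CDol}, however, you take a genuinely different route from the paper. The paper conditions on the high-probability event $\{r_nD\nn\le C\}$ (respectively $\{r_n\TPLn/n\le C\}$), applies case \ref{CDed} (resp.\ \ref{CDel}) to the conditioned trees to obtain tightness of the conditional process family in $D(\bbR_+,\SSSoi)$, and then transfers tightness to the unconditional family via the union bound \eqref{sof9} on a compact set $K$. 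You instead stay entirely at the level of the hypothesis \eqref{uj4} of \refT{TT}: using that $1-\E(Q\nn(t)\mid\cT\nn)$ is bounded by both $1$ and $t\tY_n$, you get $1-\E Q\nn(t)\le\E\min(1,t\tY_n)\le tM+\eps$ uniformly in $n$ once $M$ is chosen from tightness of $(\tY_n)$, and the double limit $t\downto0$, $\eps\downto0$ then delivers \eqref{uj4}. Your truncation argument is more elementary, in the sense that it never leaves the scalar quantity $\E Q\nn(t)$ and never needs to manipulate the processes or invoke compact sets in $D(\bbR_+,\SSSoi)$; the paper's conditioning argument is slightly more modular (it recycles the already-proved case \ref{CDed} as a black box) but requires the extra step of relating tightness of the conditional family to tightness of the original family. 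Both are correct.
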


\begin{proof}
\ref{CDed}: 
Let $V_1$ and $V_2$ be independent random vertices as in \refT{Td}. 
Then $d\nn(V_1,V_2)\le D\nn$, and thus \ref{CDed} implies \eqref{sof4}
and the tightness follows by \refT{Td}.

\ref{CDel}: 
Similarly, if $\cT\nn$ has the root $\rho$, then \eqref{tpl} implies
\begin{align}\label{sof8}
  \E\bigpar{d\nn(V_1,V_2)\mid\cT\nn}
\le
\E\bigpar{d\nn(V_1,\rho)+d\nn(\rho,V_2)\mid\cT\nn}
=\frac{2}{n}\TPLn.
\end{align}
Consequently, 
$\E{d\nn(V_1,V_2)}\le \frac{2}{n}\E\TPLn$, and thus \eqref{sof4} follows
from \ref{CDel}.

\ref{CDod}:
  Let $\eps>0$. By \ref{CDod} there exists a constant $C$ such that
$\P(r_nD\nn>C) < \eps$ for every $n$.
We may apply \ref{CDed} to the conditioned trees 
$(\cT\nn\mid r_nD\nn\le C)$,
and conclude that the conditioned processes 
$(X\nn(\cdot)\mid r_nD\nn\le C)$
form a tight family in $D(\bbR_+,\SSSoi)$.
Hence, there exists a compact set $K$ in $D(\bbR_+,\SSSoi)$
such that, for every $n$,
$\P\bigpar{X\nn(\cdot)\notin K\mid r_nD\nn\le C}\le \eps$,
and consequently
\begin{align}\label{sof9}
\P\bigpar{X\nn(\cdot)\notin K}
\le
\P\bigpar{X\nn(\cdot)\notin K\mid r_nD\nn\le C}
+\P\bigpar{ r_nD\nn> C}
\le 2\eps.
\end{align}
This implies that the family $X^{(n)}(\cdot)$ is tight.

\ref{CDol}: Similar, now using \ref{CDel}.
\end{proof}
Now, let us consider the case where each edge $e$ in $\mathcal{T}^{(n)}$ is
deleted at a random time 
$\hT_e \sim U(0, t_{n})$,
independently for all edges. 
Let
$\widehat{\cT}\nn(t)$ be the fragmented forest at time $t$, and let
$\widehat{X}\nn(\cdot)=(\widehat{X}_i\nn(\cdot))_{i \geq 1}$ be the corresponding
fragmentation process of normalized component sizes, defined analogously as
before in this section.

\begin{corollary} \label{CoroFixI}
With notations as above,
suppose that $t_n\to\infty$ and that
one of the following conditions holds:
\begin{romenumerate}
\item \label{CFId}
$\E d\nn(V_1,V_2)=O(t_{n})$.   
\item \label{CFIed}
$\E D^{(n)}=O(t_{n})$.
\item \label{CFIel}
$\E \TPLn=O(nt_{n})$.
\item \label{CFIod}
$D^{(n)}=\Op(t_{n})$.
\item \label{CFIol}
$\TPLn=\Op(nt_{n})$.
\end{romenumerate}
Then, the family of processes $\widehat{X}\nn(\cdot)$ is tight in
$D(\bbR_+,\SSSoi)$.
\end{corollary}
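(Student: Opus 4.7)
The plan is to reduce the uniform-deletion case to the exponential-deletion case already covered by \refT{Td} and \refC{CD}, via a deterministic coupling of the edge-deletion times. I will construct it as follows. Take i.i.d.\ $U_e \sim U(0,1)$, one per edge of $\cT\nn$, and on the same probability space set $\hat T_e := t_n U_e \sim U(0, t_n)$ and $T_e := -t_n \log(1 - U_e) \sim \Exp(1/t_n)$. Let $X\nn(s)$ denote the exponential fragmentation built from the $T_e$ with rate $r_n := 1/t_n$ and $\hat X\nn(u)$ the uniform one built from the $\hat T_e$. Since $\hat T_e > u$ iff $T_e > \phi_n(u)$ with $\phi_n(u) := -t_n \log(1 - u/t_n)$, the surviving subforests coincide and
\begin{align*}
  \hat X\nn(u) = X\nn(\phi_n(u)), \qquad u \in [0, t_n).
\end{align*}
The function $\phi_n$ is a smooth strictly increasing bijection of $[0, t_n)$ onto $[0, \infty)$, and since $t_n \to \infty$ it converges to the identity uniformly on every compact interval.

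Next, I verify that with $r_n = 1/t_n$ each of the hypotheses (i)--(v) of the corollary is precisely the matching hypothesis of \refT{Td} (for~(i)) or \refC{CD} (for~(ii)--(v)); for example $\E D\nn = O(t_n)$ reads as $r_n \E D\nn = O(1)$. Hence the exponential family $\{X\nn\}$ is tight in $D(\bbR_+, \SSSoi)$.

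To pass tightness to $\hat X\nn = X\nn \circ \phi_n$, I appeal to the following property of the Skorohod $J_1$ topology: if $x_n \to x$ in $D(\bbR_+, \SSS)$ and $\phi_n$ are continuous strictly increasing functions converging to the identity locally uniformly, then $x_n \circ \phi_n \to x$ in $D(\bbR_+, \SSS)$ as well; concretely, if $\lambda_n \to \mathrm{id}$ witnesses $x_n \to x$ in the sense $\|x_n - x\circ \lambda_n\|_{[0,K]}\to0$, then $\lambda_n \circ \phi_n$ witnesses the convergence of $x_n \circ \phi_n$ to $x$. Combining this, via Skorohod representation on convergent subsequences extracted from the tight family $\{X\nn\}$, yields tightness of $\{\hat X\nn\}$ in $D(\bbR_+, \SSS)$, equivalently in $D(\bbR_+, \SSSoi)$ by \refL{LSSSoi}.

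The only technical point---and the main obstacle---is the transfer step, which requires composing the time change $\phi_n$ with those witnessing Skorohod convergence of the $X\nn$'s and checking that the composition still tends to the identity; once this is set up, the tightness of $\hat X\nn$ is immediate and no new probabilistic estimates are needed beyond those established in the exponential case.
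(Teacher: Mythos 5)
Your proposal is correct and follows essentially the same route as the paper: couple the uniform and exponential deletion times via the same underlying $U_e$'s, observe that $\hat X\nn = X\nn\circ\phi_n$ with $\phi_n\to\mathrm{id}$ locally uniformly, reduce the hypotheses to those of \refT{Td} or \refC{CD} with $r_n=t_n^{-1}$, and transfer tightness through the time change. The only difference is that the paper states "our claim follows" where you spell out the Skorohod $J_1$ time-change composition and the subsequence/Skorohod-representation step, which is a correct and welcome elaboration of the same argument.
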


\begin{proof}
Define $a_{n}(t) := t_{n}(1-e^{-t/t_{n}})$. Observe that
$\widehat{\cT}\nn(a_{n}(t))$ represents the fragmented forest at time $t$
resulting from independently deleting each edge $e$ in $\mathcal{T}^{(n)}$
at a random time $T_e:=-t_n\log(1-\hT_e/t_n)\sim\Exp(r_n)$, 
where $r_n:=t_n\qw$ and the $T_e$'s are i.i.d. 
In particular,
$X\nn(t) = \widehat{X}\nn(a_{n}(t))$. 
If \ref{CFId} holds, then
we have $r_{n}\E d\nn(V_1,V_2)=O(1)$, and thus
\refT{Td} 
implies that the family of processes $X\nn(\cdot)$ is tight in $D(\bbR_+,\SSSoi)$. 
Similarly, if one of the other conditions holds, then the same conclusion
holds by \refC{CD}.

Define $b_{n}(t) := -t_{n} \ln (\max(1-t/t_{n}, 0))$. Observe that
$b_{n}(t)$ converges to the identity function uniformly on each compact
interval $[0,s]$, $s \in \mathbb{R}_{+}$. 
Thus, since $\widehat{X}\nn(t) = X\nn(b_{n}(t))$
for $t< t_n$, 
our claim follows.
\end{proof}

\section{Fixing the proof of tightness in \cite{BOH2023}}
\label{Scorrect}

Let $\mu = (\mu(k), k \geq 0)$ be a probability distribution on the
non-negative integers satisfying $\sum_{k = 0}^{\infty} k \mu(k) = 1$. In
addition, we always implicitly assume that $\mu(0) + \mu(1) < 1$ (or
equivalently, $\mu(0) >0$), and that $\mu$ is aperiodic. We say that $\mu$
belong to the domain of attraction of a stable law of index $\alpha \in
(1,2]$ if for a sequence $(Y_{i})_{i \geq 1}$ of i.i.d.\ random variables
with distribution $\mu$,  there exists a sequence of positive real numbers
$(B_{n})_{n \geq 1}$ such that $B_{n} \rightarrow \infty$ and
\begin{eqnarray}\label{da} 
\frac{Y_{1} + Y_{2} + \cdots + Y_{n} - n}{B_{n}}  \dto Z_{\alpha}, \quad \text{as} \quad n \rightarrow \infty,
\end{eqnarray}
where $Z_{\alpha}$  is a random variable with Laplace transform
$\mathbb{E}[\exp(-\lambda Z_{\alpha})] = \exp(\lambda^{\alpha})$
whenever
$\alpha \in (1,2)$, and $\mathbb{E}[\exp(-\lambda Z_{2})] =
\exp(\lambda^{2}/2)$ if $\alpha=2$, for every  $\lambda > 0$.
(See \cite[Section XVII.5]{FellerII}, and note that  
since $Y_i\ge0$, 
we may normalize $B_n$ such that \eqref{da} holds.)
In particular, for $\alpha =2$,
we have that $Z_{2}$ is distributed as a standard Gaussian random variable.
Furthermore, if \eqref{da} holds, then
$B_{n}$ is of order about $n^{1/\alpha}$; more precisely, $B_{n}/n^{1/\alpha}$
is a slowly varying sequence.

Let $\mathcal{T}^{(n)}_{\rm GW}$ be a critical Galton--Watson tree
conditioned on having $n$ vertices and whose offspring distribution $\mu$
belongs to the domain of attraction of a stable law. Conditioned on the
tree, each edge $e$ is deleted at some random time 
$\hT_e=\frac{n}{B_{n}}(1-U_e)$,
independent of all other edges, 
where as in \cite[Section 1]{BOH2023}, we assume
that the 
$U_e$ are i.i.d.\ with a common $U(0, 1)$ distribution;
hence $\hT_e\sim U(0,n/B_n)$.
Let
$\widehat{X}\nn(\cdot)=(\widehat{X}_i\nn(\cdot))_{i \geq 1}$ be the associated
fragmentation process of normalized component sizes, as defined before
Corollary \ref{CoroFixI}. In the notation of \cite[(3)]{BOH2023},
$\widehat{X}\nn(\cdot) = \mathbf{F}_{n}^{(\alpha)}(\cdot)$.

\begin{corollary} \label{CoroFixII}
The family of processes $\widehat{X}\nn(\cdot)$ is tight in $D(\bbR_+,\SSSoi)$. 
\end{corollary}

\begin{proof}
Let $H^{(n)}$ be the height of $\mathcal{T}^{(n)}_{\rm GW}$. 
Then, $D^{(n)} \leq 2 H^{(n)}$, where $D^{(n)} $ is the diameter of
$\mathcal{T}^{(n)}_{\rm GW}$. It follows from \cite[Theorem 2]{Kortchemski}
that $\mathbb{E}[D^{(n)} ] =
O(n/B_{n})$. Therefore, our claim follows from 
Corollary \ref{CoroFixI}\ref{CFIed}
(with $t_n=n/B_n$).
Alternatively, it follows in the same way from
\refC{CoroFixI}\ref{CFIod}
that is suffices to show that
$(B_n/n)H\nn = \Op(1)$, which follows from the 
convergence in distribution of the height process 
(or of the contour process)
\cite[Theorem 3.1]{Duquesne}, see also \cite[Theorem 3]{Kortchemski2013}.
Or from \refC{CoroFixI}\ref{CFIol}, since the total path length $\TPLn$ of
$\mathcal{T}^{(n)}$ satisfies $\TPLn=\Op(n^{2}/B_{n})$, by \cite[Corollary
4.11]{Delmas2018}.
\end{proof}

\begin{remark}\label{RBOH2023}
Corollary \ref{CoroFixII} addresses a gap in the tightness argument of \cite[Theorem 1]{BOH2023}. Specifically, this issue originated from \cite[Lemma 5]{BOH2023}, which is used in the proof of Theorem \cite[Theorem 1]{BOH2023}. While the convergence of the finite-dimensional distributions in \cite[Lemma 5]{BOH2023} (and consequently in Theorem \cite[Theorem 1]{BOH2023}) is correct, the tightness proof within \cite[Lemma 5]{BOH2023} contains a gap due to the application of the unfortunately incorrect \cite[Lemma 22]{BroutinMarckert}.
We may now instead use \refC{CoroFixII}.

Note that \cite{BOH2023} considers the space $\SSSoo$ defined in \eqref{SSSoo}.
However, in this particular case, since
$\widehat{X}\nn(t)\in\SSSi$ 
for every
$t\ge0$
(a.s.)
and \cite{BOH2023} shows the existence of a finite-dimensional
limit $\widehat{X}(\cdot)\in\SSSi$, 
tightness in $D(\bbR_+,\SSSoi)$ implies 
convergence $\hX_n(\cdot) \dto \hX(\cdot)$ 
in $D(\bbR_+,\SSSoi)$,
which by \refL{LSSSi} and \refR{RSSSoo} implies convergence in
$D(\bbR_+,\SSSi)$ and in
$D(\bbR_+,\SSS_{<\infty})$.
\end{remark}

\section{Fixing the proof of \cite[(2) of Theorem 3]{BroutinMarckert}}
\label{Swrong}

As we said in the introduction, one motivation for the present note is that the compactness result \cite[Lemma 22]{BroutinMarckert} is incorrect. The lemma concerns subsets of the space $D(I,\ell^p_{\ge0})$  of \cadlag{} functions from an interval $I$ to $\ell^p_{\ge0}$, equipped with the Skorohod $J_1$ topology. However, the issue remains for functions with values in a finite-dimensional space, as seen by the following counterexample.

\begin{example}
  For $n\ge2$,
let $g_n$ be the distribution function of a uniform distribution on 
$[\frac12-\frac1n,\frac12]$, i.e., 
$g_n(\frac12-\frac1n)=0$,
$g_n(\frac12)=1$ with $g_n$ linear in between and constant outside  
this interval.
Let $f_n(x)$ be the vector $(g_n(x), 1-g_n(x), 0,0,\dots)$.
It is then easy to see that the set $\set{f_n,n\ge2}$
satisfies the assumptions of \cite[Lemma 22]{BroutinMarckert}
(with $I=[0,1]$, $p=1$, and $M=2$ for every $\eps$).
Nevertheless, the set is not relatively compact since no subsequence of $(f_n)$
converges. To see this it suffices to consider the first components
$g_n(x)$; it is easy to see that 
no subsequence can converge since the set of continuous functions is closed
in $D(I,\bbR)$, and on this subset, convergence in the Skorohod topology is
equivalent to uniform convergence.
The error in the proof 
in \cite{BroutinMarckert} 
is the assertion that any sequence of 
[uniformly] bounded non-decreasing functions on $I$ has an accumulation
point in $D(I,\bbR)$. 
This is correct for the weaker $M_1$ topology, 
see e.g.\ \cite[Corollary 12.5.1]{Whitt},
but not for the standard $J_1$ topology;
a counterexample is provided by $g_n$ above.
\end{example}

We now provide a corrected proof for the claim made in \cite[(2) of Theorem 3]{BroutinMarckert}. Let $\mathcal{T}^{(n)}_{\rm Cayley}$ be a uniform Cayley tree with $n$ vertices (uniformly labelled tree on $\{1, \dots, n\}$).  Conditioned on the tree, each edge $e$ is deleted at some random time $\hT_e=\sqrt{n}(1-U_e)$, independent of all other edges. As in \cite[Section 3.2.3]{BroutinMarckert}, we assume that the $U_e$ are i.i.d.\ with a common $U(0, 1)$ distribution. Let $\widehat{X}\nn(\cdot)=(\widehat{X}_i\nn(\cdot))_{i \geq 1}$ be the associated fragmentation process of normalized component sizes, as defined before Corollary \ref{CoroFixI}. 

In \cite{BroutinMarckert}, the process $\widehat{X}\nn(\cdot)=(\widehat{X}_i\nn(\cdot))_{i \geq 1}$ is encoded using a process $\mathbf{y}_{+}^{n, (t)} = (\mathbf{y}_{+}^{n, (t)}(x))_{x \in [0,1]}$,  as defined in \cite[(7)]{BroutinMarckert}. More precisely, $\widehat{X}\nn(t)$ is the sequence of excursion lengths of $\mathbf{y}_{+}^{n, (t)}$ above its minimum sorted in decreasing order at time $t$. Let $\mathbf{y}_{+}^{(t)}(x) := \mathbf{e}(x)-tx$, for $x \in [0,1]$ and $t \geq 0$, where $\mathbf{e}$ is the normalised Brownian excursion (with unit length). Let $\widehat{X}(t)$ be the sequence of excursion lengths of $\mathbf{y}_{+}^{(t)}$ above its minimum sorted in decreasing order. Following the notation of \cite{BOH2023}, $\widehat{X}\nn(t) = \pmb{\gamma}^{+,n}(t)$ and $\widehat{X}(t) = \pmb{\gamma}^{+}(t)$. 
\begin{theorem}
$\widehat{X}\nn(\cdot)\dto \widehat{X}(\cdot)$ in $D(\bbR_+,\SSSi)$.
\end{theorem}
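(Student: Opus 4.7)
The plan is to combine the finite-dimensional convergence already derived in \cite{BroutinMarckert} with tightness in $D(\bbR_+, \SSSoi)$ obtained from \refC{CoroFixI}, and then to upgrade the mode of convergence to $D(\bbR_+, \SSSi)$ via \refL{LSSSi}. The finite-dimensional convergence $\widehat{X}\nn(t) \fdto \widehat{X}(t)$ is obtained in \cite{BroutinMarckert} from the joint convergence of the encoding processes $\mathbf{y}_+^{n,(t)}$ to $\mathbf{y}_+^{(t)}$; this part of the argument does not invoke the flawed \cite[Lemma 22]{BroutinMarckert}, which appears only in the subsequent functional-convergence conclusion we are now correcting.

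Next, to obtain tightness of $\widehat{X}\nn(t)$ in $D(\bbR_+, \SSSoi)$, I would apply \refC{CoroFixI} with $t_n = \sqrt{n}$. A uniform Cayley tree on $n$ vertices is distributed as a Galton--Watson tree with Poisson$(1)$ offspring conditioned on having $n$ vertices, and this offspring law lies in the $\alpha = 2$ domain of attraction with normalization $B_n$ of order $\sqrt{n}$. Consequently the reasoning of \refC{CoroFixII} applies essentially verbatim: the height satisfies $H^{(n)} = \Op(\sqrt{n})$, either via Aldous's scaling limit for the contour process of the Cayley tree or via \cite[Theorem 3.1]{Duquesne}, so $D^{(n)} \le 2 H^{(n)} = \Op(\sqrt{n})$, verifying hypothesis \refC{CoroFixI}\ref{CFIod}. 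Together with the finite-dimensional convergence this yields $\widehat{X}\nn(t) \dto \widehat{X}(t)$ in $D(\bbR_+, \SSSoi)$, as in the proof of \refT{T2}.

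To lift the convergence from $D(\bbR_+, \SSSoi)$ to $D(\bbR_+, \SSSi)$, I would apply \refL{LSSSi}, which requires $\widehat{X}(t) \in \SSSi$ almost surely for every $t \ge 0$. Equivalently, the excursion lengths of $\mathbf{y}_+^{(t)}$ above its running minimum must partition $[0,1]$ up to a Lebesgue-null set; that is, the closed set of running-minimum times $\{x \in [0,1] : \mathbf{y}_+^{(t)}(x) = \inf_{s \le x} \mathbf{y}_+^{(t)}(s)\}$ must have Lebesgue measure zero almost surely. This is a standard property of the Brownian excursion and of its drifted variant $x \mapsto \mathbf{e}(x) - tx$, following from the fact that the associated reflected process $x \mapsto \mathbf{y}_+^{(t)}(x) - \inf_{s \le x} \mathbf{y}_+^{(t)}(s)$ is a non-trivial continuous semimartingale whose zero set has positive Hausdorff dimension but vanishing Lebesgue measure.

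The main obstacle is really just bookkeeping: the heavy lifting is done by the tightness machinery of \refS{Stree} together with classical scaling limits for Cayley trees. The one mildly technical point is the a.s.\ identity $\sum_i \widehat{X}_i(t) = 1$ for every $t$, but this reduces to the well-known Lebesgue-nullity of the set of running-minimum times of the Brownian excursion.
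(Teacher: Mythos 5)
Your proposal matches the paper's own argument step for step: finite-dimensional convergence comes from the encoding results in \cite{BroutinMarckert} (via \cite[Theorem 10]{BroutinMarckert} together with \cite[Lemmas 4 and 7]{Bertoin}), tightness in $D(\bbR_+,\SSSoi)$ is obtained from \refC{CoroFixI} with $t_n=\sqrt n$ using a $\sqrt n$ bound on the diameter, and the upgrade to $D(\bbR_+,\SSSi)$ follows from \refL{LSSSi} once one knows $\widehat X(t)\in\SSSi$ a.s.\ (which the paper cites as \cite[Lemma 7]{Bertoin} and you re-derive from Lebesgue-nullity of the minimum set). The only cosmetic difference is that you verify the diameter bound by viewing the Cayley tree as a conditioned Poisson$(1)$ Galton--Watson tree and invoking Duquesne's contour-process limit, whereas the paper cites the classical Cayley-tree estimates of R\'enyi--Szekeres (with Aldous, Tak\'acs, Meir--Moon as alternatives), but both routes validly establish the same hypothesis of \refC{CoroFixI}.
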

\begin{proof}
Define $\overline{\mathbf{y}}_{+}^{n, (t)}(x) := \sup_{s \in [0,x]} (-\mathbf{y}_{+}^{n, (t)}(s))$ and $\overline{\mathbf{y}}_{+}^{(t)}(x) := \sup_{s \in [0,x]}(- \mathbf{y}_{+}^{(t)}(s))$, for $x \in [0,1]$. Following \cite[Section 3.1]{Bertoin}, given an increasing path $g$ in $D([0,1], \mathbb{R})$, we write $\mathbf{F}(g)$ for the sequence of the lengths of the intervals components of the complement of the support of the Stieltjes measure $\mathrm{d} g$, arranged in the decreasing order. In particular, 
$\widehat{X}\nn(t) = \mathbf{F}(\overline{\mathbf{y}}_{+}^{n, (t)})$ and $\widehat{X}(t) = \mathbf{F}(\overline{\mathbf{y}}_{+}^{(t)})$. 
Then, using \cite[Theorem 10]{BroutinMarckert} together with \cite[Lemma 4
and Lemma 7]{Bertoin}, one can show the finite-dimensional convergence
$\widehat{X}\nn(\cdot)\fdto \widehat{X}(\cdot)$ in $D(\bbR_+,\SSSi)$. 

On the other hand, 
Corollary \ref{CoroFixI} (with $t_n=\sqrt n$), 
shows that the family of processes
$\widehat{X}\nn(\cdot)$ is tight in $D(\bbR_+,\SSSoi)$,
because
it was shown by \cite{Renyi1967} that  
the diameter $D^{(n)}$ of $\mathcal{T}^{(n)}$ satisfies
$\mathbb{E}[D^{(n)}] = O(\sqrt{n})$,
see also \cite[Corollary 1.3]{Svante}.
Alternatively, we can use that
$D\nn=\Op(\sqrt n)$ by \cite{AldousII,AldousIII},
or that
the total path length $\TPLn$ of $\mathcal{T}^{(n)}$ satisfies
$\mathbb{E}[\TPLn]=O(n^{3/2})$
by \cite{Takacs1992,Takacs1993},
see also \cite[Theorem 3.4]{Svante2003};
furthermore, we
may also use 
$\E d\nn(V_1,V_2)=O(n^{1/2})$
which was proved in 
\cite{MeirMoon1970};
again see also \cite[Theorem 3.4]{Svante2003}.
Combining this and finite-dimensional convergence,
we obtain  $\widehat{X}\nn(\cdot)\dto \widehat{X}(\cdot)$ in $D(\bbR_+,\SSSoi)$.
Finally, since
$\widehat{X}(t)\in\SSSi$ for every $t$ a.s.\ (see \cite[Lemma 7]{Bertoin}),
our claim follows by \refL{LSSSi}.
\end{proof}

\section{Application to the fragmentation of trees with a specified degree sequence} \label{Sfixed}

For $n \in \mathbb{N}$, a sequence $\mathbf{s}_{n} = (N_{n}(i))_{i \geq 0}$
of non-negative integers is the degree sequence of some finite rooted plane
tree if and only if $\sum_{i \geq 0} N_{n}(i)= 1 + \sum_{i \geq 0} i
N_{n}(i) < \infty$. Then, a random tree $\mathcal{T}^{(n)}_{\mathbf{s}}$
with given degree sequence $\mathbf{s}_{n}$ is a random variable whose law
is uniform on the set of rooted plane trees with $|\mathbf{s}_{n}| := \sum_{i \geq 0}
N_{n}(i)$ vertices amongst which $N_{n}(i)$ have $i$ offspring for every $i
\geq 0$, and therefore with
$\sum_{i \geq 0} i N_{n}(i)$ edges. 

Define $\sigma^2_{n} := \sum_{i \geq 1} i(i-1)N_{n}(i)$ and let $b_{n}$ be a sequence of non-negative real number such that $b_{n} \rightarrow \infty$, $|\mathbf{s}_{n}|/b_{n} \rightarrow \infty$ and $b_{n}/\sigma_{n} \rightarrow 1$. Conditioned on the tree, each edge $e$ is deleted at some random time $\hT_e=\frac{|\mathbf{s}_{n}|}{b_{n}}(1-U_e)$, independent of all other edges. We assume that the $U_e$ are i.i.d.\ with a uniform $U(0, 1)$ distribution. Let $\widehat{X}\nn(\cdot)=(\widehat{X}_i\nn(\cdot))_{i \geq 1}$ denote the associated fragmentation process of normalized component sizes, as defined prior to Corollary \ref{CoroFixI}.

\begin{corollary} \label{CoroFixIII}
The family of processes $\widehat{X}\nn(\cdot)$ is tight in $D(\bbR_+,\SSSoi)$. 
\end{corollary}

\begin{proof}
For a vertex $v$ in  $\mathcal{T}^{(n)}_{\mathbf{s}}$, we denote its height
by $h\nn(v)$. Without loss of generality, by considering $n$ large enough if
necessary, we assume that 
$|\mathbf{s}_{n}| \geq b_{n}$ and $\sigma_{n} >0$.
Let $\Delta_{n} := \max\{i \geq 0: N_{n}(i)>0 \}$. 
Then, since $\sigma_{n} >0$, we have
$\Delta_{n} \geq 2$ and 
$\sigma_n \ge \Delta_{n}$.
Let $V_1$ and $V_2$ be two independent uniformly random
vertices of $\mathcal{T}^{(n)}_{\mathbf{s}}$. Observe that $d^{(n)}(V_{1},
V_{2}) \leq h^{(n)}(V_{1}) + h^{(n)}(V_{2})$. It follows from
\cite[Proposition 4.5]{Marzouk} 
that there exists two universal constants
$c_{1}, c_{2}>0$ such that  
\begin{align}
\mathbb{P}\bigpar{h^{(n)}(V_{1}) \geq x |\mathbf{s}_{n}|/b_{n}  } \leq c_{1}e^{-c_{2}x \frac{\sigma_{n}}{b_{n}}}
\end{align}
\noindent uniformly for $x >0$ 
and $n \in \mathbb{N}$. 
(See also \cite[Theorem 1.1]{Louigi} for a similar bound.)
Thus, $\mathbb{E}d^{(n)}(V_{1}, V_{2}) \le 2\E h\nn(V_1)= O(|\mathbf{s}_{n}|/b_{n})$ and
our claim follows from Corollary \ref{CoroFixI}\ref{CFId}.
\end{proof}

\begin{remark}\label{RBHP2025} 
Recently, the fragmentation process of trees with a given degree sequence
has been studied in \cite{BHP2025}, where the convergence of
$\widehat{X}\nn(\cdot)=(\widehat{X}_i\nn(\cdot))_{i \geq 1}$ was established. However, at the time of writing this note, a problem existed in the tightness argument, specifically originating from the unfortunately incorrect \cite[Lemma 22]{BroutinMarckert}. Now, Corollary \ref{CoroFixIII} completes the proof. 
As in \refR{RBOH2023}, the result holds also in 
$D(\bbR_+,\SSSi)$ and $D(\bbR_+,\SSS_{<\infty})$
by \refL{LSSSi} and \refR{RSSSoo}.
\end{remark}

\section{Fragmentations of a weighted tree}\label{Sweight}
In \refSs{Stree}--\ref{Sfixed} we have studied fragmentations of trees,
where we have measured the sizes of components by their number of vertices.
More generally, we may assume that we are given, for each $n$,  
a random tree $\cT\nn$ with a nonrandom vertex set
$\cV\nn=\cV(\cT\nn)$ (finite or infinite), and
a family of nonnegative weights $\bw\nn=(w\nn_j:j\in \cV\nn)$ with total mass
$\sum_j w\nn_j<\infty$.
We consider random fragmentation of the tree $\cT\nn$ as above, with edges
$e$ deleted at i.i.d.\ times $T_e$ that are either $\Exp(r_n)$ or $U(0,t_n)$.
We then define the fragmentation process $Y\nn(\cdot)=(Y_i\nn(\cdot))\ixoo$ by letting
\begin{align}
  Y\nn_i(t):=\sum_{j\in \cT\nn_i(t)} w_j\nn,
\end{align}
the total weight of the component $\cT\nn_i(t)$, for $i=1,\dots,\nu\nn(t)$, where we now arrange the
components in order of decreasing weight.

For simplicity we will only consider the case when the weight sequence 
$\bw\nn$ is a probability distribution on $\cV\nn$, i.e.,
$\sum_j w\nn_j=1$. To stress this we use the notation $\bp\nn=(p\nn_j:j \in \cV\nn)$
instead of $\bw\nn$. 
(We leave the modifications for the general case to the reader.)

\begin{theorem}
  \label{TTW}
With assumptions as above, 
\refTs{TT} and \ref{TT2} still hold (with $Y\nn$ and $Y$ instead of $X\nn$ and $X$), and \refT{Td} holds if $V_1$ and $V_2$
now are independent vertices of $\cT\nn$ with the distribution $\bp\nn$.
\end{theorem}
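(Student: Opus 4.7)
The plan is to follow the proofs of Theorems \ref{TT}, \ref{TT2}, and \ref{Td} essentially verbatim, replacing the uniform pairing $\frac{1}{n^2}\sum_{v,w}$ by the probability-weighted pairing $\sum_{v,w}p\nn_v p\nn_w$. The key observation is that the general machinery of Section~\ref{Spf} (Corollary~\ref{CT1} together with Lemma~\ref{L2}) applies to any fragmentation process in $\SSS$, so once we control $Q\nn(t):=\sum_i(Y\nn_i(t))^2$ in the weighted setting, the rest is automatic. Since $\sum_j p\nn_j=1$, the process $Y\nn(t)$ takes values in $\SSSi$ with $Y\nn(0)=(1,0,0,\dots)$, so in particular $Q\nn(0)=1$.

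First, for the analogue of \refT{TT}, I would derive the weighted version of \eqref{uj1}. When an edge $e$ is deleted at time $T_e$, some component of weight $Y$ splits into pieces of weights $Y'$ and $Y''$, so
\begin{align}
Q\nn(T_e-)-Q\nn(T_e)=Y^2-(Y')^2-(Y'')^2=2Y'Y''
=\sum_{v,w\in\cT\nn}p\nn_v p\nn_w\,\etta\bigl\{&\text{$e$ lies on the path between $v$ and $w$ in $\cT\nn(0)$},\\
&\text{and this path is intact in $\cT\nn(T_e-)$}\bigr\}.\notag
\end{align}
Each term is a decreasing function of $T_e$, so the same stopping-time comparison as in the proof of \refT{TT} gives
\begin{align}
\E\bigsqpar{Q\nn(\tau_n)-Q\nn(\tau_n+h_n)}\le \E\bigsqpar{Q\nn(0)-Q\nn(h_n)}=1-\E Q\nn(h_n),
\end{align}
and the assumption \eqref{uj4} (with $Q\nn$ interpreted in the weighted sense) reduces \eqref{qn2} to $\E Q\nn(h_n)\to 1$, as before. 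The argument concludes via \refC{CT1} and \refL{LSSSoi}, since $Y\nn(t)\in\SSSi\subseteq\SSSoi$.

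For \refT{Td}, the analogue of \eqref{sof1}--\eqref{sof2} is
\begin{align}
1-\E\bigpar{Q\nn(t)\mid\cT\nn}
=\sum_{v,w}p\nn_v p\nn_w\P\bigpar{\text{$T_e\le t$ for some $e$ in the path from $v$ to $w$}\mid\cT\nn}
\le r_n t\,\E\bigpar{d\nn(V_1,V_2)\mid\cT\nn},
\end{align}
where $V_1,V_2$ are independent with law $\bp\nn$ conditional on $\cT\nn$; this yields the weighted version of \eqref{sof3} immediately. For \refT{TT2}, since $Y\nn(t)\in\SSSoi$ and by \refL{L0} the $\ell^2$ metric is equivalent to the product metric on $\SSSoi$, the map $(x_i)\mapsto\sum_i x_i^2$ is continuous, so $Y\nn(t)\dto Y(t)$ in $\SSS$ implies $Q\nn(t)\dto Q(t)$ and then $\E Q\nn(t)\to\E Q(t)$ by dominated convergence (as $Q\nn(t)\le1$); combined with $Q\nn(0)=Q(0)=1$ and \cadlag{} continuity of $Y(t)$ at $t=0$, this yields \eqref{uj4} and hence tightness.

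There is no genuine obstacle: every step of the original proofs uses only the identity $Q\nn(t)=\sum_{v,w}(\text{weight mass})\,\etta\{v,w\text{ connected at time }t\}$, the linearity of expectation, and the fact that the total weight is $1$. The only minor point to verify is that $Y\nn$ is indeed a fragmentation process in the sense of \refS{Snot}, i.e., that $Y\nn(t_2)\preceq Y\nn(t_1)$ for $t_1\le t_2$ and that $Y\nn$ is \cadlag; both are immediate from the tree fragmentation dynamics by defining $\ga$ so that each component at time $t_2$ maps to the unique component at time $t_1$ containing it.
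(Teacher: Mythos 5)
Your proposal is correct and follows essentially the same route as the paper: replace the uniform pairing $\frac{1}{n^2}\sum_{v,w}$ by the $\bp\nn$-weighted pairing $\sum_{v,w}p\nn_v p\nn_w$ in the identity for $Q\nn(T_e-)-Q\nn(T_e)$ and in the bound $1-\E(Q\nn(t)\mid\cT\nn)\le r_n t\,\E(d\nn(V_1,V_2)\mid\cT\nn)$, and observe that the rest of the arguments for Theorems~\ref{TT}, \ref{TT2}, and~\ref{Td} go through unchanged. The paper's proof is the same modulo presentation, and your closing check that $Y\nn$ is indeed a fragmentation process is a sensible extra sanity check.
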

\begin{proof}
  The proofs are essentially the same.

First, for \refT{TT},
instead of \eqref{uj1} we obtain, in the same way,
\begin{multline}\label{uj1w}
  Q\nn(T_e-)-Q\nn(T_e) 
=
\sum_{v,w\in \cV\nn}p\nn_vp\nn_w\etta\bigl\{
\text{$e$ lies in the path between $v$ and $w$ in}
\\\text{$\cT\nn(0)$,
and this path remains  intact in $\cT\nn(T_e-)$}\bigr\}.
\end{multline}
The rest of the proof is the same as before.

The proof of \refT{TT2} is unchanged.

In the proof of \refT{Td}, we obtain, arguing as in \eqref{sof1}--\eqref{sof2},
\begin{align}\label{sof2w}
&1-\E\bigpar{Q\nn(t) \mid \cT\nn}
\le
\sum_{v,w\in \cV\nn} p_v^{(n)}p_w^{(n)}r_n t d\nn(v,w)
= r_n t \E \bigpar{d\nn(V_1,V_2)\mid\cT\nn},
\end{align}
which shows \eqref{sof3}.
\end{proof}

\begin{remark} \label{RemarkExtI}
The parts of \refCs{CD} and \ref{CoroFixI} that use $D\nn$ still hold. 
However, the parts involving the total path length $\gU\nn$ require us to instead consider the weighted path length $\sum_{v\in V\nn}p\nn_vd\nn(\rho,v)$. Equivalently, these parts can be stated in terms of the average depth
$\E d\nn(\rho,V)$, where $V$ is a random vertex with distribution $\bp\nn$. The part involving $\E d\nn(V_1,V_2)$ also holds, but we must consider two independent random vertices $V_{1}$ and $V_{2}$  of $\mathcal{T}^{(n)}$ chosen according to the distribution $\mathbf{p}\nn$. The details are left to the reader.
\end{remark}

\section{Application to the fragmentation of $\mathbf{p}$-trees} \label{ptreesApp}

For $n \geq 1$, let $\mathbf{p}\nn = (p_{ni}: i \geq 1)$ be a ranked
probability distribution on the positive integers. That is, $p_{n1}
\geq p_{n2} \geq 0$ and $\sum_{i \geq 1} p_{ni} =1$. The support of
$\mathbf{p}\nn$ is $\mathcal{V}\nn :=\{i \geq 1: p_{ni}>0 \}$. Let
$Y_{n,0}, Y_{n,1}, \dots$  be i.i.d.\ random variables with common
distribution $\mathbf{p}\nn$. Define a random discrete tree
$\mathcal{T}_{\mathbf{p}}^{(n)}$ to have vertex set $\mathcal{V}\nn$ and
undirected edges $\{ \{Y_{n,j-1}, Y_{n,j} \}: Y_{n,j} \not \in \{Y_{n,0},
\dots, Y_{n,j-1} \}, \, j \geq 1 \}$. The tree
$\mathcal{T}_{\mathbf{p}}^{(n)}$ is known as the birthday tree with
parameter $\mathbf{p}\nn$, or simply as the $\mathbf{p}\nn$-tree (see,
e.g., \cite{Camarri}, \cite[Section 3.1]{Aldous2000},
\cite[Section 10.2]{Pitman}, 
and references
therein). Observe that when $p_{ni}=1/n$, for $1 \leq i \leq n$, the
$\mathbf{p}\nn$-tree is a uniform random rooted tree with $n$ vertices (the
Cayley tree of size $n$), discussed above in \refS{Swrong}.

Define $\sigma_{\mathbf{p}\nn} := \sqrt{\sum_{i \geq 1}  p_{ni}^{2}}$. 
Following \cite[Section 3.1]{Aldous2000},
conditioned on
$\mathcal{T}_{\mathbf{p}}^{(n)}$, we delete each edge $e$ at some random
time  
$\hT_e=\sigma_{\mathbf{p}\nn}^{-1}(1-U_e)$, independent of all other edges, 
where we assume that the $U_e$ are i.i.d.\ with a common $U(0, 1)$
distribution. 
We define as in \refS{Sweight}
the fragmentation process $\widehat{Y}\nn(t) =
(\widehat{Y}_i\nn(t))_{i \geq 1}$ as the sequence of
$\mathbf{p}\nn$-measures of the tree-components of the fragmented forest at
time $t$, arranged in decreasing order.  
So,
$\widehat{Y}\nn(t)$ records the $\mathbf{p}\nn$-measures of components
obtained when each edge is deleted with probability
$t\sigma_{\mathbf{p}\nn}$.

\begin{corollary} \label{CorPtrees}
Suppose that $\sigma_{\mathbf{p}\nn} \rightarrow 0$. Then, the family of processes $\widehat{Y}\nn(\cdot)$ is tight in $D(\bbR_+,\SSSoi)$. 
\end{corollary}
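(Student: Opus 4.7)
The plan is to apply the weighted analogue of Corollary~\ref{CoroFixI}\ref{CFId}---namely, the combination of Theorem~\ref{TTW} and the final sentence of Remark~\ref{RemarkExtI}---with $t_n = \sigma_{\mathbf{p}\nn}^{-1}$, the upper endpoint of the uniform edge-deletion time $\hT_e$. Tightness of $\widehat Y\nn(t)$ in $D(\bbR_+,\SSSoi)$ will then follow once we establish
\begin{align*}
\E d\nn(V_1, V_2) = O\bigpar{\sigma_{\mathbf{p}\nn}^{-1}},
\end{align*}
where $V_1, V_2$ are two independent $\mathbf{p}\nn$-distributed vertices of $\mathcal{T}_{\mathbf{p}}^{(n)}$, drawn independently of the tree itself.

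My next step is to reduce this pair-distance bound to an expected-depth bound. Writing $\rho = Y_{n,0}$ for the root and $h\nn(v)$ for the depth of $v$ in the rooted tree, the triangle inequality through $\rho$ gives $d\nn(V_1,V_2) \le h\nn(V_1) + h\nn(V_2)$, so since $V_1$ and $V_2$ are identically distributed it suffices to prove
\begin{align*}
\E h\nn(V) = O\bigpar{\sigma_{\mathbf{p}\nn}^{-1}}
\end{align*}
for $V \sim \mathbf{p}\nn$ independent of the tree.

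The crucial input is this expected-depth bound, which is available from the birthday-problem and stick-breaking theory of $\mathbf{p}$-trees developed by Aldous, Camarri and Pitman (see \cite{Aldous2000, Camarri}). Under $\sigma_{\mathbf{p}\nn} \to 0$ the natural spatial scale of $\mathcal{T}_{\mathbf{p}}^{(n)}$ is $\sigma_{\mathbf{p}\nn}^{-1}$, matching the order of the first repeat time in an i.i.d.\ $\mathbf{p}\nn$-sequence; this scale is inherited in expectation by the depth of a $\mathbf{p}\nn$-biased random vertex. With this bound in hand, the conclusion follows immediately from Theorem~\ref{TTW}.

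I expect the height estimate to be the main obstacle. A naive back-of-envelope calculation---treating the successive ancestors of $V$ as essentially i.i.d.\ $\mathbf{p}\nn$-samples, absorbed at the root when the fresh sample coincides with the current ancestor---would predict absorption at rate $\sigma_{\mathbf{p}\nn}^{2}$ and hence $\E h\nn(V) \asymp \sigma_{\mathbf{p}\nn}^{-2}$; this is already wrong for the Cayley case, where $\sigma_{\mathbf{p}\nn}^{-1} = \sqrt n$ is the correct order. The reason is that successive ancestor samples reuse overlapping prefixes of the underlying birthday sequence $(Y_{n,j})$ and are far from independent. The correct scaling $\sigma_{\mathbf{p}\nn}^{-1}$ emerges only from the careful dependence-tracking analysis of the cited works.
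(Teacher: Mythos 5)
Your overall strategy matches the paper's exactly: invoke the weighted version of Corollary~\ref{CoroFixI} (via Theorem~\ref{TTW} and Remark~\ref{RemarkExtI}) with $t_n = \sigma_{\mathbf{p}\nn}^{-1}$, reducing everything to the single estimate $\E d\nn(V_1,V_2) = O(\sigma_{\mathbf{p}\nn}^{-1})$ for independent $\mathbf{p}\nn$-distributed vertices. That reduction is correct, and so is your further observation that one may pass to the root depth $\E h\nn(V)$ by the triangle inequality (though this gains nothing here: in the $\mathbf{p}$-tree construction the root is itself a $\mathbf{p}\nn$-sample, so $d\nn(\rho,V)$ already has the same distribution as $d\nn(V_1,V_2)$).

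The gap is that the estimate itself is never proved. You write that it ``is available from the birthday-problem and stick-breaking theory of $\mathbf{p}$-trees developed by Aldous, Camarri and Pitman,'' but no specific result in those works delivers the expectation bound $\E d\nn(V_1,V_2) = O(\sigma_{\mathbf{p}\nn}^{-1})$ uniformly over all $\mathbf{p}\nn$ with $\sigma_{\mathbf{p}\nn}\to 0$; indeed your own discussion flags the height estimate as ``the main obstacle'' and explains why a naive heuristic gives the wrong exponent. This is precisely the point where the paper does new work: it proves an exponential tail bound (Proposition~\ref{ProTailDist} in Appendix~\ref{App}), built on the Camarri--Pitman Poisson embedding and the identity $d(V_1,V_2)+1 \eqd R_1$ for the first repeat time, handling separately the regimes $c\sigma_{\mathbf{p}}^{-1} < (2p_1)^{-1}$ (via a tail bound on the Poisson clock $T_1$ and a Chernoff bound on $N(t)$) and $c\sigma_{\mathbf{p}}^{-1} \geq (2p_1)^{-1}$ (via a direct binomial count of repeats in $I_1$). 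Without some argument of this kind, or an explicit citation to a published bound of this form, the proof is incomplete: identifying the needed estimate and noting that it is plausible is not the same as establishing it.
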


\begin{proof}
This follows from the weighted version of \refC{CoroFixI} (see Remark
\ref{RemarkExtI}) provided that $\mathbb{E}d^{(n)}(V_{1},
V_{2}) = O(\sigma_{\mathbf{p}\nn}^{-1})$, where $V_{1}$ and $V_{2}$ are two
independent random vertices of $\mathcal{T}_{\mathbf{p}}^{(n)}$ chosen
according to the distribution $\mathbf{p}\nn$. But this last claim follows
from Proposition \ref{ProTailDist}
in \refApp{App},
thus concluding our proof. 
\end{proof}

\begin{remark}
Define 
\begin{align}
\SSS_{2} := \Bigset{(x_i)\xoo\in\SSS: 0 \leq \sumi x_i^{2} \leq 1 }
\end{align}
\noindent and
\begin{align}
\Theta := \Bigset{(x_i)\xoo\in\SSS_{2}: \sumi x_i^{2} <1 \, \, \text{or} \, \,  \sumi x_i = \infty}.
\end{align}
\noindent Aldous and Pitman \cite[Proposition 13]{Aldous2000} 
proved that if there exists a sequence $\boldsymbol{\theta} = (\theta_i)\xoo \in \Theta$ such that
\begin{align}
\sigma_{\mathbf{p}\nn} \rightarrow 0 \quad \text{and} \quad \frac{p_{ni}}{\sigma_{\mathbf{p}\nn}} \rightarrow \theta_{i}, \quad i \geq 1,
\end{align}
then, $\widehat{Y}\nn(\cdot)\fdto \widehat{Y}(\cdot)$ to some stochastic process $\widehat{Y}(\cdot)$ in $D(\bbR_+,\SSSi)$. On the other hand, we have that $\widehat{Y}\nn(t)\in\SSSi$ and $\widehat{Y}(t)\in\SSSi$ for every
$t\ge0$ (a.s.) (\cite[Lemma 12]{Aldous2000}). Then, tightness in
$D(\bbR_+,\SSSoi)$ (Corollary \ref{CorPtrees}) implies 
$\widehat{Y}\nn(\cdot)\dto \widehat{Y}(\cdot)$ in $D(\bbR_+,\SSSi)$
by \refL{LSSSi}.
The limit $\widehat{Y}(\cdot)$ is the fragmentation process of the so-called 
\emph{Inhomogeneous continuum random tree} with parameter $\boldsymbol{\theta}$; see \cite[Section 4]{Aldous2000} for its definition. 
\end{remark}

\begin{remark}
Define $M\nn(t) := \widehat{Y}\nn(\sigma_{\mathbf{p}_{n}}^{-1}e^{-t})$, for $0 \leq t < \infty$. Then, $M\nn(\cdot)$ is an additive coalescent with initial state $\mathbf{p}_{n}$; see \cite[Proposition 1]{Aldous2000}. Indeed, if we define $M(t) := \widehat{Y}(e^{-t})$, for $-\infty < t < \infty$, then $M(\cdot)$ is an ({\sl eternal}) additive coalescent; see \cite[Theorem 10]{Aldous2000}.

In \cite{Bertoin}, a specific case is analysed where $p_{nn} > 0$ and $p_{ni} = 0$ for $i > n$. \cite[Theorem 1]{Bertoin} proves that the process $(M\nn(-\log \sigma_{\mathbf{p}_{n}} - \log t), 0 < t < \sigma_{\mathbf{p}_{n}}^{-1})$ converges in the sense of finite dimensional distributions, as $n \rightarrow \infty$, in $D(\bbR_+,\SSSi)$, to a fragmentation process associated with certain bridges with exchangeable increments. Observe that $M\nn(-\log \sigma_{\mathbf{p}_{n}} - \log t) = \widehat{Y}\nn(t)$. Then, Corollary \ref{CorPtrees} and \refL{LSSSi} imply the tightness of this sequence of processes in $D(\bbR_+,\SSSi)$. As a consequence, the convergence in \cite[Theorem 1]{Bertoin} can be strengthened to convergence in distribution in $D(\bbR_+,\SSSi)$. 
\end{remark}

\appendix
\section{An exponential tail bound} \label{App}

In this appendix, we prove an exponential tail bound, used above,
on the distance between two independent random vertices of a  $\mathbf{p}_{n}$-tree, chosen independently according to the distribution $\mathbf{p}_{n}$. For notational simplicity, we will drop the $n$ dependence from the notation introduced in Section \ref{ptreesApp} and write $\mathbf{p} = (p_{i}: i \geq 1)$ for a ranked  probability distribution on the positive integers, and $\mathcal{T}_{\mathbf{p}}$ for the associated $\mathbf{p}$-tree.

\begin{proposition} \label{ProTailDist}
Let\/ $V_{1}$ and $V_{2}$ be two independent random vertices of $\mathcal{T}_{\mathbf{p}}$ chosen according to the distribution $\mathbf{p}$. Then, for all $x \geq 8$,
\begin{align}
\mathbb{P}(d(V_{1}, V_{2}) \geq x \sigma_{\mathbf{p}}^{-1} ) \leq e^{-\frac{x^{1/3}}{3\sigma_{\mathbf{p}}}} + 6e^{-\frac{x^{2/3}}{6}}. 
\end{align} 
\end{proposition}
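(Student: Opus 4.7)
The plan is to work within the birthday construction of $\mathcal{T}_{\mathbf{p}}$: an iid sequence $(Y_j)_{j\ge 0}\sim\mathbf{p}$ generates the tree by adding the edge $\{Y_{j-1},Y_j\}$ whenever $Y_j$ is new. Rooting at $Y_0$ and using the triangle inequality $d(V_1,V_2)\le h(V_1)+h(V_2)$ (with $h$ denoting depth), the claim reduces, via the independence of $V_1,V_2$ from $(Y_j)$, to a tail bound on $h(V)$ for a single $V\sim\mathbf{p}$ independent of the construction. A useful reformulation is that, conditional on the ranked sequence $W_1,W_2,\ldots$ of first appearances, the tree is a \emph{weighted} random recursive tree in which $W_k$ (for $k\ge 2$) attaches to $W_j$ with probability proportional to $p_{W_j}$, and $h(V)$ is the length of the ancestor chain $K(V)\to \pi(K(V))\to\cdots\to 1$, where $K(V)$ is the rank of $V$'s first appearance in $(Y_j)$ and $\pi$ is the parent map.

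I would bound the tail $\mathbb{P}(h(V)\ge M)$ by introducing a time cut-off $s$ (to be chosen) and splitting
\begin{align*}
\mathbb{P}\bigl(h(V)\ge M\bigr) \le \mathbb{P}\bigl(V\notin\{Y_0,\ldots,Y_{s-1}\}\bigr) + \mathbb{P}\bigl(h(V)\ge M,\ V\in\{Y_0,\ldots,Y_{s-1}\}\bigr).
\end{align*}
The first term equals $\sum_i p_i(1-p_i)^s\le\sum_i p_i e^{-sp_i}$, which I would control by truncating at a probability threshold $p^*>0$: the large-$p_i$ contribution is at most $e^{-sp^*}$, while the small-$p_i$ contribution is bounded via a Cauchy--Schwarz estimate using $\sum_i p_i^2=\sigma_{\mathbf{p}}^2$. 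For the second term, on $\{V\in\{Y_0,\ldots,Y_{s-1}\}\}$ one has $K(V)\le s$, and the ancestor-chain length admits a concentration estimate --- for example via McDiarmid's bounded-differences inequality applied to a well-chosen functional of $(Y_0,\ldots,Y_{s-1})$, or by coupling the chain with an auxiliary pure-birth process that isolates the contribution of high-probability vertices.

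Balancing the three parameters $s$, $p^*$, and $M:=y\sigma_{\mathbf{p}}^{-1}$ is what produces the two claimed rates $e^{-x^{1/3}/(3\sigma_{\mathbf{p}})}$ and $6e^{-x^{2/3}/6}$. The main obstacle is the second term: naive McDiarmid bounds give Gaussian decay of the form $e^{-M^2/s}$, which with the natural scaling $s\sim\sigma_{\mathbf{p}}^{-1}$ yields exponents in $x$ or $x^2$ rather than $x^{2/3}$. Achieving the stretched-exponential rate in the statement therefore requires the finer coupling just mentioned, and the $1/3$ vs.\ $2/3$ split of the exponents emerges precisely from balancing this coupling against the truncation in the first term.
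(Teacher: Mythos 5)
Your plan stops short of a proof precisely at the step you flag as the main obstacle: the concentration estimate for the ancestor-chain length. You write that naive McDiarmid bounds give the wrong rate and that "achieving the stretched-exponential rate ... therefore requires the finer coupling just mentioned," but that coupling is never constructed, and it is not clear that it can be made to work with the parameters you propose. In fact the depth decomposition $d(V_1,V_2)\le h(V_1)+h(V_2)$ is already a lossy move here, and the weighted-recursive-tree reformulation of the ancestor chain is awkward to control without extra structure. So as written there is a genuine gap at the heart of the argument.

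The paper avoids all of this by exploiting an exact distributional identity from Camarri--Pitman: rather than bounding depths, it uses that $d(V_1,V_2)+1$ is equal in law to $R_1$, the index of the first repeat in the i.i.d.\ birthday sequence $(Y_j)_{j\ge 0}$. It then Poissonizes, introducing $T_1$ as the first Poisson arrival time after the $R_1$-th sample, and uses Camarri--Pitman's explicit sub-Gaussian estimate $\mathbb{P}(T_1>t)\le e^{-t^2\sigma_{\mathbf{p}}^2/6}$, valid when $t<(2p_1)^{-1}$. A Chernoff bound on the Poisson count converts this into a tail bound on $R_1$ with rate $e^{-c^2/6}+e^{-c/(3\sigma_{\mathbf{p}})}$ in the regime where $p_1$ is small, while the complementary regime $c\sigma_{\mathbf{p}}^{-1}\ge(2p_1)^{-1}$ is handled by a direct elementary birthday estimate (at least two of $U_0,\dots,U_{\lfloor k\rfloor}$ must land in the largest interval $I_1$). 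Choosing $c=x^{1/3}$ and $y=x^{2/3}$ so that $yc=x$ unifies the two regimes and produces exactly the exponents $x^{1/3}$ and $x^{2/3}$ in the statement. The $1/3$--$2/3$ split thus comes out of a clean case analysis on $p_1$, not from balancing a truncation against a concentration bound as you propose. If you wish to pursue your route, you would first need to establish the depth tail bound with the right stretched-exponential rate, which is substantially harder than invoking the $R_1$-identity; replacing the triangle-inequality step by the exact identity is the decisive simplification.
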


The proof of Proposition \ref{ProTailDist} is based on the Poisson embedding described in \cite[Section 4.1]{Camarri}. Let $N$ be a homogeneous Poisson process on $[0, \infty) \times[0,1]$ of rate $1$ per unit area, with atoms $((S_{j}, U_{j}))_{j \geq 0}$ such that $0<S_{0}<S_{1}<\cdots$. Then, $(S_{j})_{j \geq 0}$ are the atoms of a homogeneous Poisson process on $[0, \infty)$ of rate $1$ per unit length, and the $(U_{j})_{j \geq 0}$ are i.i.d.\ with uniform distribution on $[0,1]$, independent of the $(S_{j})_{j \geq 0}$. Define
\begin{align}
N(t):=N([0, t] \times[0,1]) \quad \text { and } \quad N(t-):=N([0, t) \times[0,1]).
\end{align}
\noindent Then, partition the interval $[0,1]$ into sub-intervals $I_{1}, I_{2}, \ldots$ such that the length of $I_{i}$ is $p_{i}$, for $i \geq 0$. For $j \geq 0$ define
\begin{align}
Y_{j}:=\sum_{i \geq 1} i \mathbf{1}_{\{U_{j} \in I_{i}\}}.
\end{align}
\noindent So, $Y_{0}, Y_{1}, \dots$ are i.i.d.\ random variables with common distribution $\mathbf{p}$. 

From now on, assume that the $\mathbf{p}$-tree $\mathcal{T}_{\mathbf{p}}$ is constructed using the random variables defined above; see Section \ref{ptreesApp}. Let $R_{1}$ be the index of the first repeated value in the sequence $(Y_{j})_{j \geq 0}$ and let $T_{1 }:=\inf \{t \geq 0: N(t)>R_{1}\}$. Then,
as a consequence of \cite[Corollary 3]{Camarri},
\begin{align} \label{IdenDisI}
N(T_{1}-) = R_{1} \eqd d(V_{1}, V_{2})+1,
\end{align}
\noindent where $V_{1}$ and $V_{2}$ are two independent random vertices of
$\mathcal{T}_{\mathbf{p}}$ chosen according to the distribution $\mathbf{p}$.

The proof of Proposition \ref{ProTailDist} makes use of the following result, which is a straightforward consequence of \cite[(26) and (29) in Lemma 9]{Camarri}. 
\begin{lemma} \label{LemmaTail}
For $0 \leq t < (2p_{1})^{-1}$, 
we have $\mathbb{P}(T_{1} > t) \leq e^{-\frac{t^{2}}{6}\sigma_{\mathbf{p}}^{2} }$.
\end{lemma}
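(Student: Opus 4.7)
The plan is to apply the Poisson embedding of \cite{Camarri} and reduce the claim, via \eqref{IdenDisI}, to the tail bound
\begin{align*}
\mathbb{P}(R_1 \geq M) \leq e^{-x^{1/3}/(3\sigma_{\mathbf{p}})} + 6\, e^{-x^{2/3}/6},
\end{align*}
where $R_1 = N(T_1-)$ and $M := \lceil x\sigma_{\mathbf{p}}^{-1}\rceil + 1$. I would introduce the threshold $t_{\star} := x^{1/3}/\sigma_{\mathbf{p}}$, note the key scaling $M/t_{\star} \approx x^{2/3}$, and split into two cases according to whether $p_1 < \sigma_{\mathbf{p}}/(2 x^{1/3})$ (equivalently $t_{\star} < (2p_1)^{-1}$) or not.

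In the first case I would combine the elementary inclusion $\{R_1 \geq M\} \subseteq \{T_1 > t_{\star}\} \cup \{N(t_{\star}) \geq M\}$ with Lemma \ref{LemmaTail} (applicable because $t_\star < (2p_1)^{-1}$) to obtain $\mathbb{P}(T_1 > t_{\star}) \leq e^{-x^{2/3}/6}$. For the Poisson tail event $\{N(t_{\star}) \geq M\}$ I would invoke the standard Chernoff bound $\mathbb{P}(\mathrm{Pois}(\lambda) \geq k) \leq \exp(-k\log(k/\lambda) + k - \lambda)$ with $\lambda = t_{\star}$ and $k = M$; a routine calculation using $(2/3)\log x > 1$ for $x \geq 8$ shows the resulting exponent is at most $-x^{1/3}/(3\sigma_{\mathbf{p}})$.

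The second case, $p_1 \geq \sigma_{\mathbf{p}}/(2x^{1/3})$, is the main obstacle and the source of the constant $6$ in the second summand. Here Lemma \ref{LemmaTail} cannot be invoked at $t_\star$, so I would bypass the time-threshold decomposition and instead control $R_1$ directly by $R_1^{(1)}$, the index of the second occurrence of the label $1$ in the i.i.d.\ sequence $(Y_j)_{j \geq 0}$. Since a second appearance of any single label is necessarily a collision, $R_1 \leq R_1^{(1)}$, and $R_1^{(1)}$ follows the negative-binomial law
\begin{align*}
\mathbb{P}(R_1^{(1)} \geq M) = (1-p_1)^M + M p_1 (1-p_1)^{M-1}.
\end{align*}
Using $p_1 M \geq x^{2/3}/2$ together with $(1-p_1)^{M-1} \leq e \cdot e^{-p_1 M}$ and the fact that $u \mapsto u e^{-u}$ is decreasing on $[1,\infty)$, one checks that the right-hand side is bounded by $6\, e^{-x^{2/3}/6}$ for all $x \geq 8$.

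Combining both cases yields the stated bound. The main difficulty is the second case: when the largest atom $p_1$ is within a factor $2x^{1/3}$ of $\sigma_{\mathbf{p}}$ the quadratic-in-$t$ tail estimate of Lemma \ref{LemmaTail} is only available on a window too short to reach the desired rate, so one must exploit the fast repeats of the dominant label directly, and the factor $6$ is precisely what is needed to absorb the prefactor $M p_1$ appearing in the negative-binomial formula.
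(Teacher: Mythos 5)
Your proposal is a proof of the wrong statement. The statement to be proved is Lemma~\ref{LemmaTail}, which is the auxiliary bound
\[
\mathbb{P}(T_1 > t) \leq e^{-\frac{t^{2}}{6}\sigma_{\mathbf{p}}^{2}}
\quad\text{for } 0 \leq t < (2p_1)^{-1},
\]
a bound on the first repeat time $T_1$ in the Poisson embedding of \cite{Camarri}. What you have written instead is essentially the proof of Proposition~\ref{ProTailDist} (the tail bound on $d(V_1,V_2)$ with the two terms $e^{-x^{1/3}/(3\sigma_{\mathbf{p}})} + 6e^{-x^{2/3}/6}$). Worse, your argument explicitly invokes ``Lemma~\ref{LemmaTail} (applicable because $t_\star < (2p_1)^{-1}$)'' as a known input, so read as a proof of Lemma~\ref{LemmaTail} it is circular.

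For comparison, the paper's actual proof of Lemma~\ref{LemmaTail} is a short citation-based computation: it takes the explicit estimates \cite[(26) and (29) of Lemma~9]{Camarri} (applied with $\boldsymbol{\theta} = \mathbf{p}$), observes that the correction term in (29) equals $(t\sigma_{\mathbf{p}})^{2}\,\frac{tp_1}{3(1-tp_1)}$, which is at most $\frac{t^2}{3}\sigma_{\mathbb{p}}^{2}$ on the range $t < (2p_1)^{-1}$, and subtracts this from the leading $-\frac{1}{2}(t\sigma_{\mathbf{p}})^{2}$ exponent in (26) to land on $-\frac{t^2}{6}\sigma_{\mathbf{p}}^{2}$. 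None of the negative-binomial, Chernoff-for-Poisson, or case-splitting machinery in your proposal is relevant to this lemma; that machinery belongs to Proposition~\ref{ProTailDist}, where it is indeed used (and your sketch of it there is broadly in line with the paper). To repair this you would need to go back to Camarri's Lemma~9 and carry out the one-line estimate above, rather than re-deriving the downstream proposition.
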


\begin{proof}
Note that the right-hand side of \cite[(29) in Lemma 9]{Camarri} (with
$\boldsymbol{\theta}=\mathbf{p}$) can be written as
$(t\sigma_{\mathbf{p}})^{2} \frac{tp_{1}}{3(1-tp_{1})} \leq
\frac{t^{2}}{3}\sigma_{\mathbf{p}}^{2}$ because $0 \leq t <
(2p_{1})^{-1}$. Thus, our claim follows from \cite[(26) and (29)]{Camarri}.
\end{proof}

\begin{proof}[Proof of Proposition \ref{ProTailDist}]
It follows from \eqref{IdenDisI} that, for 
$h \ge 0$ and 
$0\le t\le h$,
\begin{align} \label{eq1Tail}
\mathbb{P}(d(V_{1}, V_{2}) \ge h) \leq  \mathbb{P}(N(t) \geq h) + \mathbb{P}(T_{1} > t).
\end{align}
\noindent On the other hand, recalling the following Chernoff-type
inequality (see, e.g., 
\cite[Corollary 2.4 and Remark 2.6]{JLR}
or
\cite[Section 2.2, particularly page 23]{Boucheron}),
we have that, for $t \leq h$,
\begin{align} \label{eq2Tail}
\mathbb{P}(N(t) \geq h) \leq e^{-t ((h/t) \log(h/t)-h/t+1)}.
\end{align}
\noindent In particular, if $h = 2t$, we have that $t ((h/t) \log(h/t)-h/t+1) = t (2 \log 2-1) \ge t/3$ and thus
\begin{align} \label{eq3Tail}
\mathbb{P}(N(t) \geq 2t) \leq e^{-\frac{t}{3}}.
\end{align}

Fix $c \geq 2$. If $0 \leq c\sigma_{\mathbf{p}}^{-1}  < (2p_{1})^{-1}$, then taking $t = c\sigma_{\mathbf{p}}^{-1}$ and $h =2t$, by Lemma \ref{LemmaTail}, \eqref{eq1Tail} and \eqref{eq3Tail} we have that
\begin{align} \label{eq4Tail}
\mathbb{P}(d(V_{1}, V_{2}) > 2c\sigma_{\mathbf{p}}^{-1}  )  \leq  e^{-\frac{c}{3\sigma_{\mathbf{p}}}} + e^{-\frac{c^{2}}{6}}.
\end{align}

Now assume that $c\sigma_{\mathbf{p}}^{-1} \geq (2p_{1})^{-1}$. For any positive real $k\geq 2$, if at least two of the variables $U_{0}, \dots, U_{\floor{k}}$ lie in the interval $I_{1}$ then $N(T_{1}-) \leq k+1$. Thus, by \eqref{IdenDisI},
\begin{align} \label{eq5Tail}
\mathbb{P}(d(V_{1}, V_{2}) \ge k) & \leq \mathbb{P}(\# \{ j \in \{0, \dots, \floor{k} \}: U_{j} \in I_{1} \} \leq 1) \nonumber \\
& = (1-p_{1})^{\floor{k}} (1-p_{1}+ (\floor{k}+1)p_{1}) \nonumber \\
& \leq e^{-p_{1}\floor{k}}(1+\floor{k}p_{1}),
\end{align}
\noindent where we have used that $\# \{ j \in \{0, \dots, \floor{k} \}: U_{j} \in I_{1} \} \sim \mathrm{Binomial}(\floor{k}+1, p_{1})$ and that $1-p_{1} \leq e^{-p_{1}}$. Since $\sigma_{\mathbf{p}} \leq 1$, it follows that for $k \geq 2c\sigma_{\mathbf{p}}^{-1} \geq 4$,
\begin{align} \label{eq6Tail}
\mathbb{P}(d(V_{1}, V_{2}) \ge k) & \leq  2kp_{1}e^{-\frac{kp_{1}}{2}},
\end{align}
\noindent where we have used the lower bound on $k$ to deduce that
$\floor{k}> k/2$ and that  
$kp_1 \geq 2p_1c\sigma_{\mathbf{p}}^{-1} \geq 1$
and so $1+\floor{k}p_{1}\leq 2kp_{1}$.  

Next, take $k = yc\sigma_{\mathbf{p}}^{-1}$, for $y \geq 4$. Recall that we
have assumed that $c\sigma_{\mathbf{p}}^{-1} \geq (2p_{1})^{-1}$ and thus,
$kp_{1} \geq y/2$. Since $z \mapsto 2ze^{-z/2}$ is decreasing for $z \geq
2$, the bound \eqref{eq6Tail} then implies that
\begin{align} \label{eq7Tail}
\mathbb{P}(d(V_{1}, V_{2}) \ge yc\sigma_{\mathbf{p}}^{-1}) & \leq  ye^{-\frac{y}{4}}.
\end{align}

To conclude the proof, we combine \eqref{eq4Tail} and \eqref{eq7Tail} to obtain a bound that does not depend on the value of $p_{1}$. Take $x \geq 8$, let $c=x^{1/3}\geq 2$ and $y = x^{2/3}\geq 4$. Then, $2c \leq x$ and $yc=x$. Thus, regardless of the value of $p_{1}$, either \eqref{eq4Tail} or \eqref{eq7Tail} applies, so we obtain that
\begin{align} \label{eq8Tail}
\mathbb{P}(d(V_{1}, V_{2}) \ge x\sigma_{\mathbf{p}}^{-1}) & =  \mathbb{P}(d(V_{1}, V_{2}) \ge yc\sigma_{\mathbf{p}}^{-1}) \nonumber \\
& \leq  e^{-\frac{c}{3\sigma_{\mathbf{p}}}} + e^{-\frac{c^{2}}{6}} + ye^{-\frac{y}{4}} \nonumber \\
& = e^{-\frac{x^{1/3}}{3\sigma_{\mathbf{p}}}} + e^{-\frac{x^{2/3}}{6}} + x^{2/3}e^{-\frac{x^{2/3}}{4}}.
\end{align}
\noindent Finally, our claim follows from \eqref{eq8Tail} and the
observation that  
$ze^{-z/4} \leq 5 e^{-z/6}$
for all $z \geq 0$. 
\end{proof}

\begin{acks}
We thank an anonymous referee for finding a serious gap in the statement and
proof of the main result.
\end{acks}

\newcommand\AMS{Amer. Math. Soc.}
\newcommand\Springer{Springer-Verlag}
\newcommand\Wiley{John Wiley \& Sons}

\newcommand\vol{\textbf}
\newcommand\jour{\emph}
\newcommand\book{\emph}
\newcommand\inbook{\emph}
\def\no#1#2,{\unskip#2, no. #1,} 
\newcommand\toappear{\unskip, to appear}

\newcommand\arxiv[1]{\texttt{arXiv}:#1}
\newcommand\arXiv{\arxiv}

\newcommand\xand{and }
\renewcommand\xand{\& }

\def\nobibitem#1\par{}

\end{document}